\newtheorem{thm}{Theorem}
\newtheorem{lem}[thm]{Lemma}
\newtheorem{prop}[thm]{Proposition}
\newtheorem{defn}[thm]{Definition}
\newtheorem{rem}[thm]{Remark}
\def \be {\begin{equation}}
\def \ee {\end{equation}}
\def \S{\Sigma}
\def \s{\sigma}
\def \a{\alpha}
\def \g{\gamma}
\def \ra{\rightarrow}
\def \bs {\boldsymbol}
\def \E {\mathbb{E}}
\def \P {\mathbb{P}}
\def \R {\mathbb{R}}
\def \N {\mathbb{N}}
\def\e {\varepsilon}
\def \ra {\rightarrow}
\newcommand{\ind}{\mathds{1}}
\begin{document}

\title{Long Range Games}

\author{Francesca Albertini}
\address{Dipartimento di Tecnica e Gestione dei Sistemi Industriali, Università di Padova, Stradella San Nicola 3, Vicenza, Italy}
\email{francesca.albertini@unipd.it}
\author{Paolo Dai Pra }
\address{Dipartimento di Informatica, Università di Verona,  Strada le Grazie 15, Verona, Italy}
\email{paolo.daipra@univr.it}


\date{\today}

\begin{abstract}
We consider $N$-player games, in continuous time, finite state space and finite time horizon, on a geometrical structure possessing a macroscopic limit in a suitable sense. This geometrical structure breaks the permutation invariance property that gives rise to mean field games. The corresponding limit game is a variant of mean field games that we call {\em long range game}. We prove that this asymptotic scheme satisfies the following key properties: a) the long range game admits al least one equilibrium; b) this equilibrium is unique under a suitable monotonicity condition; c) the feedback corresponding to any equilibrium of the long range game is a quasi-Nash equilibrium for the $N$-player games. We finally show that this scheme includes several examples of interaction mechanisms, in particular Kac-type interactions and interactions on generalized Erd\"{o}s-Renyi graphs. \\

\end{abstract}

\maketitle
\noindent
{\bf Keywords}. Mean field game, finite state space, jump Markov process, $N$-person games, Nash equilibrium \\
{\bf Mathematics Subject Classification (2020)}.
  60F99, 60J27, 60K35, 91A16, 93E20

\section{Introduction}

Modeling a stochastic system of $N$ interacting components denoted by $x_1,x_2,\ldots,x_N$, both in a static and in a dynamic setting, usually amounts to define interaction functions of the form
\[
F_i(x_i, {\bf x}^i)
\]
describing the effect of the interactions with all other components ${\bf x}^i := (x_j)_{j \neq i}$ on the state $x_i$ of the $i$-th component. In mean field systems one assumes these interaction functions are of the form
\begin{equation} \label{meanfield}
F_i(x_i, {\bf x}^i) \, = \, H(x_i, m^{N,i}_{{\bf x}}),
\end{equation}
where $m^{N,i}_{{\bf x}}$ is the {\em empirical measure}
\[
m^{N,i}_{{\bf x}} := \frac{1}{N-1}\sum_{j:j\neq i} \delta_{x_j},
\]
and $H$ is a given function, the same for all $i=1,2,\ldots,N$, defined on the product $\Sigma \times P(\Sigma)$, where $\Sigma$ is the state space of a single component, and $P(\Sigma)$ is the simplex of probability measures on $\Sigma$. Under suitable regularity conditions, these systems are characterized  by permutation invariance and the property that the effect of a single component on the distribution of the others is ``of order $\frac{1}{N}$'', see \cite{card10}, Theorem 2.1 for a precise statement.

A special but relevant example is when $H$ is of the form
\[
H(x,m) = \int f(x,y) m(dy),
\]
so that
\begin{equation} \label{mfpair}
F_i(x_i, {\bf x}^i) = \frac{1}{N-1} \sum_{j:j \neq i} f(x_i, x_j),
\end{equation}
in other words only {\em pair interactions} appear. 

A geometrical structure in mean field models can be introduced as in \cite{KUH63}, modifying \eqref{mfpair} as follows:
\begin{equation} \label{kac}
F_i(x_i, {\bf x}^i) = \frac{1}{N-1} \sum_{j:j \neq i} K\left(\frac{i}{N}, \frac{j}{N} \right) f(x_i, x_j),
\end{equation}
where $K:[0,1] \times [0,1] \ra \R$ is a weight function that brakes the permutation symmetry of the model. Interactions of this form are sometimes referred to as of {\em Kac type}, and have been extensively studies in several classical models is Statistical Mechanics, see e.g. \cite{BP98,COP93}. Note that the weight function $K$ provides a structure of {\em weighted graph} to the label set $\{1,2,\ldots,N\}$. Further extensions, attracting considerable attention in the last decade, include the case in which the function $K$ in \eqref{kac} is allowed to depend on $N$ explicitly, and to be {\em random}, as in the so-called {\em Graphon} mean-field models, see e.g. \cite{BCW23} for interacting diffusions with graphon interaction, and \cite{PO19,PO23, CCGL22, CH21} for games related models.

In this paper we propose a class of $N$-player games with a general form of interaction, which includes Kac-type and graphon-type interactions, and for which the limit as $N \ra +\infty$ can be appropriately dealt with. In order to emphasize the essence of this class of models, we minimize the technical difficulties by assuming that players have a finite state space, they evolve as continuous time Markov Chain, and the jump intensity coincides with player's control, which is constrained to be a Markovian feedback of the state of the whole community. The classical theory of mean field games (\cite{LL07, HMC06}) has been developed in details in this finite state space context in \cite{GMS13}. Unlike in mean field games, each player, besides her controlled state, is characterized by her {\em position} $u^N_i$ in a {\em position space} $\mathcal{U}$, that will be assumed to be a compact metric space. These positions are {\em fixed in time}; we require however that, as $N \ra +\infty$, the empirical position measure $\displaystyle{\frac{1}{N} \sum_{i=1}^N \delta_{u^N_i}}$ converges to a limit measure in weak topology. The interaction among players emerges through the cost function of each player, which depends on the states and positions through functions fo the form
\[
F^N(x_i, m^{N,i}_{{\bf x}}, u_i^N),
\]
but now 
\[
m^{N,i}_{{\bf x}} := \frac{1}{N-1}\sum_{j:j\neq i} \delta_{x_j,u_j^N}
\]
denotes the {\em joint} empirical measure of states and positions. Note that this includes pair interactions of the form
\[
\frac{1}{N-1} \sum_{j:j \neq i} K^N\left(u_i^N, u_j^N \right) f(x_i, x_j),
\]
in particular Kac and Graphon interactions. 

At a formal level, it will be easy to identify the limit, as $N \ra +\infty$, of the $N$-player game, that we call {\em long range game}. Unlike in mean field games, where a single representative player suffices to describe the behavior of the infinite community, here in the limit game we need to consider one player for each position $u \in \mathcal{U}$. However, the notion of {\em equilibrium} for the limit game can be defined similarly to that of standard mean field games. Our main aim is to show that the following (minimal) consistency properties hold:
\begin{itemize}
\item[a)]
Under standard convexity/regularity assumptions the limit game possesses at least one equilibrium.
\item[b)]
Under a suitable {\em monotonicity} condition, the equilibrium is unique.
\item[c)] 
The {\em Approximation property} holds: the feedback corresponding to {\em any} equilibrium of the limit game is {\em quasi-Nash} for the $N$-player game (see Definition \ref{def:eNash}).
\end{itemize}
Some aspect of these results are worth of a preliminary discussion. To begin with, points a) and b) above are conceptually straightforward. Indeed, the limit game we obtain can be interpreted as a mean field game in which: i) the state of the player is the pair state/position; ii) the position component of the state is static and random. This mean field game is neither finite state nor diffusive, so we checked that the usual fixed point - monotonicity argument apply, but this is essentially an exercise. The approximation property, on the other hand, is what we view as our main contribution. First of all, the above argument of extending the state with the position does not apply to the $N$-player game; indeed, we do not assume that the positions of the $N$ player has a permutation invariant distribution, and not even that it is random. Thus, the $N$-player game does not satisfy the usual symmetry conditions in Mean Field Game modeling. Moreover, as we will see later, for a given limit game there may be many different $N$-player games for which the approximation property holds. We succeed in providing general conditions for the validity of the approximation property, and test them in relevant examples. Our hope is that this general framework could provide both theoretical basis and inspiration for modeling games with many players whose interaction involves a geometric structure.

As mentioned above, we assume that the player's strategy coincides with the jump intensity. This allows to avoid two difficulties. The first is the use of {\em relaxed strategies}, needed to convexify the problem, see \cite{CF20} for the case of finite state mean field games. The second is concerned with the difficulties related with {\em propagation of chaos} in this position dependent context (see \cite{BCW23}). Finally, in the description of the long range game, we do not represent the dynamics of players in different position as a family of stochastic processes in the same probability space; this raises measurability issues that can be overcome with the so-called Fubini extension (see \cite{ACL22,CDP24}). All these issues have independent interest, but are not dealt with here.

This paper is organized as follows. In Sections 2 and 3 we model the $N$-player game, identify the limit long range game, and state existence of equilibrium and uniqueness under monotonicity. In Section 4 we deal with the approximation property. Section 5 is devoted to examples. Finally, in Section 6 we give the proofs of the results contained in Sections 3 and 4.

\section{The $N$-player game}

We consider the continuous time evolution of the states $X^N_i(t)$, $i=1,2,\ldots,N$, of $N$ players; the state of each player belongs to a given finite set $\S$ that we identify with the set $\{1,2, \ldots,d\}$, so $d = |\S|$. Each player is also characterized by her spatial position $u^N_i \in \mathcal{U}$, where $\mathcal{U}$ is a compact   metric space. We assume different players have different positions.
Players are allowed to control, via an arbitrary {\em feedback}, their jump rates. For $i=1,2,\ldots,N$ and $y \in \S$, we denote by $\a_y^i: [0,T] \times  \S^{N} \ra [0,+\infty)$ the rate at which player $i$ jumps to the state $y \in \S$; it is allowed to depend on the time $t \in [0,T]$, and on the state ${\bm x} = (x_i)_{i=1}^N$ of all players. Denoting by $A_N$ the set of functions $ [0,T] \times  \S^{N} \ra [0,+\infty)$ which are measurable and locally integrable in time, we assume $\a_y^i \in A_N$. So we write $\a^i \in \mathcal{A} := A_N^{\S}$, and let ${\bs{\a}}^N \in \mathcal{A}^N$ denote the controls of all players, that will be also called {\em strategy vector}.
In more rigorous terms, for ${\bs \a}^N \in \mathcal{A}^N$, the state evolution $\bm{X}_t := (X_i(t))_{i=1}^N$ is a Markov process, whose law is uniquely determined as solution to the martingale problem for the time-dependent generator
\[
\mathcal{L}_t f({\bm x}) = \sum_{i=1}^N \sum_{y \in \S} \a^i_y (t,{\bm x}) \left[ f([{\bm x}^i,y]) - f({\bm x})\right],
\]
where
\[
[{\bm x}^i,y]_j = \left\{ \begin{array}{ll} x_j & \mbox{for } j \neq i \\ y &  \mbox{for } j = i. \end{array} \right.
\]
Alternatively, the state evolution can be given as solution of the stochastic differential equations
\[
X_i^N(t) = X_i^N(0) +  \int_{[0,t]\times [0,+\infty) \times \S }\left( y - X_i^N(s^-) \right) \ind_{(0, \a^i_y (s^-,{\bm X^N(s^-)}))}(\xi) \mathcal{N}_i(ds,d\xi,dy),
\]
where $\left(\mathcal{N}_i\right)_{i\geq 1}$ are independent Poisson random measures on $[0,T] \times [0,+\infty) \times \S$ with intensity measure given by the product of the Lebesgue measure on $[0,T]$, the Lebesgue measure on $\R$ and the counting measure on $\S$.

Now let $P(\mathcal{U} \times \S)$ be the set of probability measures on the product of the Borel $\s$-field of $\mathcal{U}$ and the set of all subsets of $\S$, provided with the topology of weak convergence. With this topology $P(\mathcal{U} \times \S)$ is compact. To every ${\bm x} \in \S^N$ we associate 
the element of $P(\mathcal{U} \times \S)$
\be \label{empmeas}
m^{N,i}_{\bm{x}} :=  \frac{1}{N-1}\sum_{j=1, j \neq i}^N \delta_{(u^N_j,x_j)}.
\ee
Thus $m^{N,i}_{\bm{x}} $ is the {\em joint} empirical measure of position and state of all players except player $i$. Note that the dependence of $m^{N,i}_{\bm{x}} $ on $u_j^N$, $j \neq i$ is omitted, as the positions $u_j^N$ do not change in time. 
Given the functions
\[
L: \S \times [0,+\infty)^{\S} \times \mathcal{U} \ra \R, \ \ F_N: \S \times P(\mathcal{U} \times \S) \times \mathcal{U} \ra \R, \ \ G_N: \S \times P(\mathcal{U} \times \S) \times \mathcal{U}  \ra \R,
\]
the feedback controls ${\bs{\a}}^N \in \mathcal{A}^N$ and the corresponding process ${\bm X}$, the {\em cost} associated to the $i$-th player is given by
\begin{equation} \label{iplayercost}
\begin{split}
J_i^N(\boldsymbol{\alpha}^N) := \mathbb{E}\Bigg[\int_{0}^T \big[L(X_i(t), \alpha^i(t,\bm{X}_t),u_i^N)   + & F_N\big(X_i(t), m^{N,i}_{\bm{X}}(t), u_i^N \big) \big] dt  \\ & + G_N\big(X_i(T), m^{N,i}_{\bm{X}}(T), u_i^N \big) \Bigg].
\end{split}
\end{equation}
For a strategy vector $\boldsymbol{\alpha}^N = (\alpha^1, \dots, \alpha^N)\in\mathcal{A}^N$ and   $\beta\in\mathcal{A}$, denote by $[\boldsymbol{\alpha}^{N,-i}; \beta]$ the perturbed strategy vector given by
\begin{equation*}
[\boldsymbol{\alpha}^{N,-i}; \beta]_j := 
\begin{cases}
\alpha_j, \ \ j \neq i\\ 
\beta, \ \ j = i.
\end{cases}
\end{equation*}
\begin{defn} \label{def:Nash}
A strategy vector $\boldsymbol{\alpha}^N$ is a Nash equilibrium for the $N$-player game if for each $i=1, \dots, N$
\begin{equation*}
J_i^N(\boldsymbol{\alpha}^N) = \inf_{\beta\in\mathcal{A}}J_i^N([\boldsymbol{\alpha}^{N,-i}; \beta]).
\end{equation*}
\end{defn}
The notion of Nash equilibrium can be weakened as follows.
\begin{defn} \label{def:eNash}
For  given $\e_N, \delta_N \geq0$ , a strategy vector $\boldsymbol{\alpha}^N$ is a $\e_N$-$\delta_N$-Nash equilibrium for the $N$-player game if the inequality
\begin{equation} \label{epsNash}
J_i^N(\boldsymbol{\alpha}^N) \leq  \inf_{\beta\in\mathcal{A}}J_i^N([\boldsymbol{\alpha}^{N,-i}; \beta]) + 
\e_N
\end{equation}
holds for a number of players not smaller than $N(1-\delta_N)$. If $\delta_N=0$ we say that $\boldsymbol{\alpha}^N$ is a $\e_N$-Nash equilibrium. 
\end{defn}
\begin{rem} \label{rem:openloop}
In what follows we will actually use a slightly stronger notion of $\e_N$-$\delta_N$-Nash equilibrium, in which $\beta\in\mathcal{A}$ in \eqref{epsNash} is replaced by $\beta \in \mathcal{B}$, where $\mathcal{B}$ is the set of strategies that are predictable with respect to the filtration generated by the Poisson random measures $\mathcal{N}_i$, $i \geq 1$.
\end{rem}

\section{Long range game: existence and uniqueness}

The limit of the $N$-player game as $N \ra +\infty$ is based on the assumption that the empirical distribution of players' positions has a weak limit as $N \ra +\infty$: there exists a probability $\mu \in P(\mathcal{U})$ such that
\be \label{limpos}
\frac{1}{N} \sum_{i=1}^N \delta_{u_i^N} \ \ra \ \mu \ \ \mbox{weakly as $N \ra +\infty$}.
\ee
Moreover, we assume that the functions $F_N$ and $G_N$ appearing in the cost $J_i^N$ possess a limit, that we denote by $F$ and $G$, in a sense that will be specified later.

It is useful to introduce the notation
\[
\mathcal{P}_{\mu} = \{ m \in P(\mathcal{U} \times \S): \, m(A \times \S) = \mu(A) \mbox{ for all } A \mbox{ Borel subsets of } \mathcal{U}\}
\]
for the set of probability on $\mathcal{U} \times \S$ with first marginal $\mu$. 
Note that $\mathcal{P}_{\mu}$ is a compact subset of $P(\mathcal{U} \times \S)$.

The limiting game is defined as follows. For each $u \in \mathcal{U}$ a player with state $Y_u$ faces the following problem:
\begin{itemize}
\item[(i)]
the player controls its jump intensities $\a_y: [0,T] \times \S \ra [0,+\infty)$, $y \in \S$, via feedback controls depending on time and on her own state,  that are locally integrable in time; we denote by $Y_u^{\a}(t)$ the state at time $t$ with the feedback $\a$.
\item[(ii)]
For a given deterministic flow of probability measures 
$m : [0,T] \to \mathcal{P}_{\mu}$, the player aims at minimizing the cost
{\small
\be \label{mfcost}
J_u(\alpha,m):= \mathbb{E} \left[ \int_0^T
	\left[L(Y^{\a}_u(t), \alpha(t,Y^{\a}_u(t)), u) + F(Y^{\a}_u(t),m(t),u)\right]dt +G(Y^{\a}_u(T), m(T),u)\right].
\ee 
}
\item[(iii)]
Denote by $\a^{*,m}$ the optimal control for the above problem (next assumptions will imply existence and uniqueness), and let $(Y_u^{*,m}(t))_{t \in [0,T]}$ be the corresponding optimal process. The flow $(m(t))_{t \in [0,T]}$ must satisfy the following consistency relation:
\[
m(t) = \text{Law}(Y_u^{*,m}(t)) \mu(du)
\]
for every $t \in [0,T]$.
\end{itemize}

To solve step (ii) above, which is a standard optimal control problem for a finite state Markov chain, one introduces the value function

\begin{equation}\label{Vm}
\begin{split}
V^m(t,x,u)  =  \E_{Y^{\a^{*,m}}_u(t) = x} &\left[ \int_t^T L(Y^{\a^{*,m}}_u(s), \a^{*,m}(s, Y^{\a^{*,m}}_u(s)), u) ds \right.+ \\ 
 & \left.  \int_t^T F(Y^{\a^{*,m}}_u(s), m(s),u) ds + G(Y^{\a^{*,m}}_u(T), m(T),u) \right]
\end{split}
\end{equation}
Note that, being the value function of an optimal control problem,
\begin{equation} \label{infV}
\begin{split}
V^m(t,x,u)  =  \inf_{\a} \E_{Y^{\a}_u(t) = x} &\left[ \int_t^T L(Y^{\a}_u(s), \a(s, Y^{\a}_u(s)), u) ds \right.+ \\ 
 & \left. 
 \int_t^T F(Y^{\a}_u(s), m(s),u) ds + G(Y^{\a}_u(T), m(T),u) \right],
 \end{split}
\end{equation}
where the $\inf$ is over all locally integrable feedback controls.
For any fixed $u \in \mathcal{U}$, $V^m(t,x,u)$ satisfies the $|\S|$-dimensional ODE
\be \label{HJB}
\begin{split}
- \frac{d}{dt} V^m(t,x,u) + H(x, \nabla V^m(t,x,u),u) & = F(x,m(t),u) \\
V^m(T,x,u) & = G(x,m(T),u)
\end{split}
\ee
where $H: \S \times \R^{\S} \times \mathcal{U} \ra \R$ is defined by
\be \label{ham}
H(x,p,u) = - \inf_{a \in [0,+\infty)^{\S}} \left[ \sum_{y \neq x} a_y p_y + L(x,a,u) \right]
\ee
and, whenever $f: \S \ra \R$, we set $\nabla f(x) = (f(y) - f(x))_{y \in \S} \in \R^{\S}$. We will make assumptions on the function $L$ guaranteeing the existence of a unique minimizer $a^*(x,p,u)$ in \eqref{ham}. This implies that the optimal control is given by
\be \label{optcont}
\a^{*,m}(t,x,u) = a^*(x,\nabla V^m(t,x,u),u).
\ee
Thus, step (iii) above is equivalent to the fact that the flow $(m(t))_{t \in [0,T]}$ solves the forward Kolmogorov equation for the Markov chain with rates $\a^{*,m}$: if $m(t,u)$ denotes a regular version of the conditional distribution of $m(t)$, i.e. $m(t)(x,du) = m_x(t,u) \mu(du)$, we require
\be \label{Kfor}
\frac{d}{dt} m_x(t,u) = \sum_{y \in \S} m_y(t,u) a_x^*(y,\nabla V^m(t,y,u),u).
\ee
Putting together  \eqref{HJB} and \eqref{Kfor} we obtain the full system of equations:
\be \label{MFG}
\begin{split}
- \frac{d}{dt} V^m(t,x,u) & = - H(x, \nabla V^m(t,x,u),u) +  F(x,m(t),u) \\
V^m(T,x,u) & = G(x,m(T),u) \\
\frac{d}{dt} m_x(t,u) & = \sum_{y \in \S} m_y(t,u) a_x^*(y,\nabla V^m(t,y,u),u) \\
m_x(0,u) & = m_{x,0}(u)
\end{split}
\ee
Note that these equations are {\em coupled}, as $F$ and $G$ depend on the whole measure $m$.
In order to adapt to this spatially dependent context the standard results in Mean Field games we make the following assumptions. They could be weakened, but allow to illustrate the whole structure without technicalities.

\medskip

\noindent

{\em Assumption 1}. The function $L: \S \times [0,+\infty)^{\S} \times \mathcal{U} \ra \R$ is differentiable in the second variable, and we denote by $\nabla_a L(x,a,u)$ its differential;  both $L(x,a,u)$ and $\nabla_a L(x,a,u)$  are assumed to be continuous in $u$. Moreover, we assume there exists $\g>0$ such that for every $x \in \S$, $a,a' \in \R^{\S}$, and $u \in \mathcal{U}$
\be \label{convex}
L(x,a',u) \geq L(x,a,u) + \nabla_a L(x,a,u) \cdot (a' - a) + \g \|a'-a\|^2,
\ee
where ``$\cdot$'' denotes the standard scalar product on $\R^{\S}$. 
\medskip

\noindent
{\em Assumption 2}.
The functions $F,G: \S \times P(\mathcal{U} \times \S) \times \mathcal{U} \ra \R$ are bounded and 
 continuous on $\mathcal{P}_{\mu}\times  \mathcal{U}$.
 \medskip 

\noindent
{\em Assumption 3}. The functions $F,G: \S \times P(\mathcal{U} \times \S) \times \mathcal{U} \ra \R$ satisfy the following assumption: for every $m, \tilde{m} \in \mathcal{P}_{\mu}$

\be
\label{mon}
\sum_{x \in \S} \int \left[ F(x,m,u) - F(x,\tilde{m},u) \right] \left( m_x(u) - \tilde{m}_x(u) \right) \mu(du)  \geq 0.
\ee

\begin{thm} \label{th:existence}
Under Assumption 1 and 2, the Mean Field Game equations \eqref{MFG} admit a solution.
\end{thm}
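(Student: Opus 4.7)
The plan is to apply Schauder's fixed point theorem to a suitable map on a convex compact subset of $C([0,T]; \mathcal{P}_{\mu})$. Specifically, I would define $\Phi$ as follows: given a flow $m \in C([0,T]; \mathcal{P}_{\mu})$, first solve the backward HJB system \eqref{HJB} to get $V^m$; then extract the optimal feedback $\a^{*,m}$ through \eqref{optcont}; finally solve the forward Kolmogorov equation \eqref{Kfor} with initial data $m_{x,0}(u)$, with $u \in \mathcal{U}$ treated as a parameter, obtaining a new flow $\Phi(m)$. A fixed point of $\Phi$ is, by construction, a solution of the full system \eqref{MFG}.

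Each step is legitimate under the stated assumptions. Assumption 1 applied at $a=0$ in \eqref{convex} shows that $a \mapsto \sum_{y \neq x} a_y p_y + L(x,a,u)$ is strongly convex and coercive on $[0,+\infty)^{\S}$, so a unique minimizer $a^*(x,p,u)$ exists; strong convexity together with the continuity of $L$ and $\nabla_a L$ in $u$ gives continuity of $a^*$ (and hence of $H$) in $(p,u)$. For each fixed $u$, \eqref{HJB} is then a $|\S|$-dimensional backward ODE with locally Lipschitz right-hand side and a continuous forcing term (by Assumption 2 and the continuity of $t \mapsto m(t)$), hence uniquely solvable; the boundedness of $F$ and $G$ yields a uniform bound on $V^m$, on $\nabla V^m$, and thus on $\a^{*,m}$, uniformly in $m$ and $u$. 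With $\a^{*,m}$ bounded, \eqref{Kfor} is a linear ODE in $m_x(\cdot,u)$ whose solution remains in the simplex, and the first marginal $\mu$ is automatically preserved since $u$ is frozen in time. Consequently $\Phi$ takes values in a set $\mathcal{K}$ of uniformly Lipschitz flows, which is a convex compact subset of $C([0,T]; \mathcal{P}_{\mu})$ by Arzelà--Ascoli.

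It remains to show that $\Phi$ is continuous on $\mathcal{K}$. If $m^n \to m$ uniformly in the weak topology of $\mathcal{P}_{\mu}$, then by uniform continuity of $F$ and $G$ on the compact set $\mathcal{P}_{\mu} \times \mathcal{U}$ (Assumption 2) the forcing and terminal data of \eqref{HJB} converge uniformly in $(t,u)$; continuous dependence of ODE solutions on data yields $V^{m^n} \to V^m$ uniformly, then continuity of $a^*$ gives convergence of the feedbacks, and finally continuous dependence of the linear Kolmogorov ODE \eqref{Kfor} on its coefficients yields $\Phi(m^n) \to \Phi(m)$ in $C([0,T]; \mathcal{P}_{\mu})$. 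Schauder's theorem then furnishes a fixed point.

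The main obstacle, in my view, is the passage from pointwise-in-$u$ convergence of $V^{m^n}$ and $\a^{*,m^n}$ to the weak convergence of $\Phi(m^n)(t)$ on the product space $\mathcal{U} \times \S$ that is required for continuity in $\mathcal{P}_{\mu}$. This is handled by exploiting the uniform bounds $m_x(t,u) \in [0,1]$, the compactness of $\mathcal{U}$, and the continuity of $V^m$ and $\a^{*,m}$ in $u$ (which propagates from the continuity in $u$ of $L$, $F$, $G$ through the parameter dependence of the ODE), and concluded by dominated convergence when testing against bounded continuous functions on $\mathcal{U} \times \S$.
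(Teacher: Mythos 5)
Your proposal follows essentially the same route as the paper: the same fixed-point map (HJB $\to$ optimal feedback via the minimizer $a^*$ $\to$ forward Kolmogorov equation), compactness of a convex set of uniformly-in-time Lipschitz flows via Arzel\`a--Ascoli, continuity of the map from the continuous dependence of $V^m$ on $m$ (which the paper makes quantitative via a Gronwall estimate), and a Schauder-type fixed point theorem. The supporting facts you invoke --- existence, uniqueness, $u$-continuity and $p$-Lipschitz continuity of $a^*$ from the strong convexity in Assumption 1, and boundedness of $V^m$ from the boundedness of $F$ and $G$ --- are exactly the content of the paper's two preliminary lemmas, so no further comparison is needed.
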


\begin{thm} \label{th:uniqueness}
Under Assumption 1, 2 and 3, the Mean Field Game equations \eqref{MFG} admit a unique solution.
\end{thm}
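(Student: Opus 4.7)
The plan is to adapt the Lasry--Lions duality argument to our spatial setting. Given two solutions $(V^1,m^1)$ and $(V^2,m^2)$ of \eqref{MFG}, both sharing the initial datum $m_{x,0}(u)$, introduce
\begin{equation*}
\phi(t) := \sum_{x \in \S} \int_{\mathcal{U}} \bigl(V^1(t,x,u)-V^2(t,x,u)\bigr)\bigl(m^1_x(t,u)-m^2_x(t,u)\bigr)\,\mu(du).
\end{equation*}
The goal is to show $\phi'(t)\le 0$, $\phi(0)=0$, and $\phi(T)\ge 0$, and then to extract $m^1\equiv m^2$ from the resulting equality cases.

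Differentiating $\phi$ and substituting the HJB equation for $V^i$ and the forward Kolmogorov equation for $m^i$, then using the envelope identity $\nabla_p H(y,p,u)=-a^*(y,p,u)$ (which follows from \eqref{ham} and Assumption~1, the latter ensuring the unique minimizer $a^*$), a standard rearrangement yields
\begin{equation*}
\phi'(t)\;=\;-\sum_x\int\bigl[F(x,m^1(t),u)-F(x,m^2(t),u)\bigr](m^1_x-m^2_x)\,\mu(du)\;+\;R_1(t)+R_2(t),
\end{equation*}
where, writing $p^i=\nabla V^i(t,y,u)$,
\begin{equation*}
R_i(t)=\sum_y\int m^i_y\bigl[H(y,p^i,u)-H(y,p^{3-i},u)+a^*(y,p^i,u)\cdot(p^i-p^{3-i})\bigr]\,\mu(du).
\end{equation*}
Since $H(y,\cdot,u)$ is convex in $p$ (being a supremum of affine functions), the bracket in $R_i$ is the nonpositive gap in the subgradient inequality, so $R_i(t)\le 0$; Assumption~3 makes the $F$-term nonpositive, hence $\phi'\le 0$. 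The common initial datum gives $\phi(0)=0$, and the terminal condition $V^i(T,x,u)=G(x,m^i(T),u)$ together with the monotonicity of $G$ (the $G$-analogue of \eqref{mon}) gives $\phi(T)\ge 0$. Combined with $\phi'\le 0$ this forces $\phi(T)=0$ and $\phi'(t)\equiv 0$ a.e., so in particular $R_1\equiv R_2\equiv 0$ and the $F$-term vanishes as well.

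The vanishing of $R_i$ implies that, $dt\otimes\mu$-a.e. and for every $y$ with $m^i_y(t,u)>0$, the element $a^*(y,p^i,u)$ attains the supremum defining $H(y,p^{3-i},u)=\sup_a[-a\cdot p^{3-i}-L(y,a,u)]$; by the strict convexity of $L$ in $a$ the maximizer is unique, so $a^*(y,p^1,u)=a^*(y,p^2,u)$ on $\{m^1>0\}\cup\{m^2>0\}$. Consequently $m^1_y\,a^*_x(y,p^1,u)=m^1_y\,a^*_x(y,p^2,u)$ as functions of $y$, so $m^1$ satisfies the same linear Kolmogorov equation as $m^2$ with the common initial condition, and linear-ODE uniqueness (for each fixed $u$) yields $m^1\equiv m^2$. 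The HJB equations for $V^1$ and $V^2$ then share identical forcing and terminal data, giving $V^1\equiv V^2$ by uniqueness for the backward ODE \eqref{HJB}. I expect the main technical hurdle to be the sign bookkeeping inside the $\phi'$ decomposition and the passage from $R_i\equiv 0$ to $a^*(y,p^1)=a^*(y,p^2)$, which relies in an essential way on the strong convexity \eqref{convex} of $L$.
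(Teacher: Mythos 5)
Your proposal is correct and follows essentially the same route as the paper: the same Lasry--Lions cross functional $\phi(t)$, nonpositivity of its derivative via Assumption~3 and convexity of the Hamiltonian, the boundary signs at $t=0$ and $t=T$, and the final passage from equality of the optimal rates on $\{m^1+m^2>0\}$ to uniqueness via the linear Kolmogorov and backward HJB ODEs. The only (inessential) difference is that the paper exploits the strong convexity \eqref{convex} of $L$ to get the quantitative bound $\phi'(t)\le -\gamma\int\sum_x\|a^*(x,\nabla V^1,u)-a^*(x,\nabla V^2,u)\|^2(m^1_x+m^2_x)\,\mu(du)$, whereas you use convexity of $H$ plus uniqueness of the minimizer to reach the same conclusion qualitatively.
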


\section{The Approximation Theorem}

To prove the approximation result we need another regularity assumption on the functions $F$ and $G$ and we have to 
 specify the mode of convergence of $F_N, G_N$ to $F,G$.
\medskip

\noindent
{\em Assumption 4}. The function $F$ and $G$ are such that 
 if a sequence $m_n\in P(\mathcal{U} \times \S)$ converges weakly to $m\in \mathcal{P}_{\mu}$ then $F(x,m_n,u) \to F(x,m,u)$ uniformly in $u\in\mathcal{U}  $, same for $G$.

\medskip

\noindent
{\em Assumption 5(a)}. For $Q=F$ and $Q=G$ we have
\[
\lim_{N \ra +\infty} \sup_{{\bm x} \in \S^N, 1\leq i\leq N} \left| Q_N(x_i, m^{N,i}_{\bm{x}},u_i^N) - Q(x_i, m^{N,i}_{\bm{x}},u_i^N)\right| = 0.
\]

\medskip
\noindent
{\em Assumption 5(b)}. For $Q=F$ and $Q=G$ we have
\[
\lim_{N \ra +\infty}\sup_{{\bm x} \in \S^N} \frac{1}{N}\sum_{i=1}^N  \left| Q_N(x_i, m^{N,i}_{\bm{x}},u_i^N) - Q(x_i, m^{N,i}_{\bm{x}},u_i^N)\right| = 0.
\]
Note that Assumption 5(a) is stronger than assumption 5(b).
\begin{thm} \label{th:approx}
Suppose Assumption 1, 2, 4 and 5(b) hold. Let $(V,m)$ be a solution of the Mean Field Game Equation \eqref{MFG}, and set
\[
\a^{*}(t,x,u) = a^*(x,\nabla V(t,x,u),u),
\]
where $a^*(x,p,u)$ is the unique minimizer  in \eqref{ham}. Denote by ${\bs \a}^N$ the feedback control for the $n$-player game given by
\[
\a^{N,i}(t, {\bs x}) = \a^{*}(t,x_i,u_i^N),
\]
Then there exist sequences $\varepsilon_N, \delta_N \ra 0$ as $N \ra +\infty$ such that ${\bs \a}^N$ is a $\varepsilon_N$-$\delta_N$-Nash equilibrium for the $N$-player game. In the case Assumption 5(a) also hold, then we can take $\delta_N \equiv 0$.
\end{thm}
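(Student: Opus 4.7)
The plan is to exploit a crucial decoupling. Since $\a^{N,i}(t,\bm{x}) = \a^{*}(t,x_i,u_i^N)$ depends only on player $i$'s own state and her fixed position, the processes $X_1^N,\ldots,X_N^N$ are independent Markov chains under $\bs{\a}^N$, with rates $\a^{*}(t,\cdot,u_j^N)$ respectively. Moreover, if player $i$ deviates to any predictable $\beta \in \mathcal{B}$, the other $N-1$ players still evolve as independent chains whose joint law does not depend on $\beta$; consequently the empirical measure $m^{N,i}_{\bm{X}}(t)$ is independent of $\beta$. This $\beta$-insensitivity is the engine of the proof.

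First I would prove a law of large numbers: for each $t$, $m^{N,i}_{\bm{X}}(t) \ra m(t)$ weakly in probability, uniformly in $i$. By the consistency relation in \eqref{MFG}, $\mathrm{Law}(X_j^N(t)) = m(t,u_j^N)$, so for any bounded continuous $\varphi: \mathcal{U} \times \S \ra \R$,
\[
\E\!\left[\int \varphi\, d m^{N,i}_{\bm{X}}(t)\right] = \frac{1}{N-1}\sum_{j \neq i} \sum_{x \in \S} \varphi(u_j^N,x)\, m_x(t,u_j^N) \ \ra\ \int \varphi\, dm(t),
\]
by \eqref{limpos} and continuity of $m_x(t,\cdot)$ in $u$ (which follows from Assumptions 1--2 applied to \eqref{MFG}: $V$ and hence $\nabla V$ and $a^*$ are continuous in $u$). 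Independence of the chains gives variance $O(1/N)$, yielding convergence in probability.

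Next, for any $\beta \in \mathcal{B}$ I would decompose $J_i^N([\bs{\a}^{N,-i};\beta]) = \tilde{J}_{u_i^N}(\beta,m) + R_i^N(\beta)$, where $\tilde{J}_u(\beta,m)$ denotes the single-player MFG cost \eqref{mfcost}. The remainder $R_i^N(\beta)$ collects two errors: (a) replacing $F_N, G_N$ by $F, G$, controlled by Assumption~5; and (b) replacing $F(\cdot,m^{N,i}_{\bm{X}}(t),\cdot)$ and $G(\cdot,m^{N,i}_{\bm{X}}(T),\cdot)$ by their limits at $m(t), m(T)$, controlled by the LLN together with the uniform-in-$u$ continuity from Assumption~4 (using also that $\S$ is finite). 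The critical observation is that both errors admit a $\beta$-independent bound $\eta_i^N$: for (a) because the pointwise bound in Assumption~5 does not involve $\beta$, and for (b) because $m^{N,i}_{\bm{X}}$ is $\beta$-independent. Since $V(0,\cdot,u)$ is the value function of the single-player problem with cost $\tilde{J}_u(\cdot,m)$, attained at $\a^*$,
\[
J_i^N(\bs{\a}^N) \leq \tilde{J}_{u_i^N}(\a^*,m) + \eta_i^N = \inf_{\beta} \tilde{J}_{u_i^N}(\beta,m) + \eta_i^N \leq \inf_{\beta} J_i^N([\bs{\a}^{N,-i};\beta]) + 2\eta_i^N.
\]
Under Assumption~5(a), $\sup_i \eta_i^N \ra 0$ and we may take $\delta_N \equiv 0$; under Assumption~5(b), only the average $\frac{1}{N}\sum_i \eta_i^N \ra 0$, so a Markov's inequality applied to $\{i : \eta_i^N > \e_N/2\}$ with, e.g., $\e_N = \sqrt{\frac{1}{N}\sum_i \eta_i^N}$ yields the required $\e_N$-$\delta_N$-Nash property.

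The main obstacle is the uniform $\beta$-free control of error (b): one needs $\max_{x \in \S}|F(x,m^{N,i}_{\bm{X}}(t),u_i^N) - F(x,m(t),u_i^N)|$ to vanish in expectation, either uniformly in $i$ (for 5(a)) or in an averaged sense (for 5(b)). This blends the weak convergence from the LLN with the uniform-in-$u$ continuity from Assumption~4, and requires the convergence $m^{N,i}_{\bm{X}}(t) \ra m(t)$ to be compatible with the position-dependent evaluation at $u_i^N$, for which the continuity of $m(t,\cdot)$ and the compactness of $\mathcal{U}$ are essential. A uniform-in-$t$ version of the LLN, obtained by exploiting the boundedness of the rates $\a^*$ together with standard martingale estimates for the jump chains, allows the time integration and the terminal evaluation to go through.
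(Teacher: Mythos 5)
Your proposal follows essentially the same route as the paper's proof: the same decoupling observation (independence of the non-deviating players and $\beta$-insensitivity of $m^{N,i}_{\bm X}$), the same two-error decomposition ($F_N$ vs.\ $F$ via Assumption 5, and $m^{N,i}_{\bm X}(t)$ vs.\ $m(t)$ via a variance-based law of large numbers combined with \eqref{limpos} and continuity of $m_x(t,\cdot)$), the same use of the optimality of $\a^*$ for the limit cost, and the same Markov-inequality treatment of the bad set under Assumption 5(b). The only cosmetic difference is that the paper handles the time integral by dominated convergence from the pointwise-in-$t$ estimate rather than a uniform-in-$t$ LLN, which slightly simplifies that step.
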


\begin{rem} \label{rem:alfa}
In Theorem \ref{th:approx} the assumption that the players' strategies $\a$ coincide with the jump intensity is quite relevant. More generally, one could consider models in which the jump intensity is a function of the strategy and of the empirical measure. This on one hand brakes the convexity of the model, which could be solved by considering relaxed strategies (see \cite{CF20}). On the other hand, the states of the $N$ players using the strategy $\a^*$ would not be anymore independent. The approximation theorem would require a {\em propagation of chaos} result for these extended mean field models; this is, we believe, a problem of independent interest.
\end{rem}

\begin{rem}
\label{rem:quant}
It would be desirable to prove a {\em quantitative} approximation theorem, i.e. with explicit estimates for $\varepsilon_N$ and $\delta_N$. This would require more explicit assumptions on the convergence of $F_N$ and $G_N$ to their limits. We have decided to postpone this study to a forthcoming paper, which will include another related topic, namely the approximation property of the feedback produced by the {\em Master Equation} (see \cite{CP19}) rather than by Equation \eqref{MFG}. This feedback is expected to have a better behavior, and in this context quantitative estimates are crucial.
\end{rem}

\section{Examples}

Is this section we illustrate some classes of examples for the functions $F$ and $G$, and discuss different $N$-player games which are approximated by the same Long Range Game.

\subsection{Models with two body interaction}

In this section we present some models where  the two functions $F$ and $G$ have the form given by (\ref{effe}) below. In what follows we write all the statements and assumptions only for the function $F$, implying the same also for $G$. 

Assume the function $F$ (and $G$) are of the type:
\be\label{effe}
F(x,m,u):=\int_{\mathcal{U}\times \S}K(u,v)f(x,y)m(dv,dy)
\ee
and  where 
\begin{itemize}
\item[(a)] $f:\S\times\S \to R$ and $K:\mathcal{U}\times \mathcal{U}\to \R$ are  bounded functions, and
\item[(b)]  $\forall \, \epsilon >0$ there exists an open set $A_{\epsilon}\subseteq \mathcal{U}\times \mathcal{U}$ such that: 
$K(\cdot,\cdot)$ is uniformly continuous in $A_{\epsilon}$, and, denoting by $B^u_\epsilon=\{ v\, :\, (u,v)\in A_\epsilon\}$, 
 we have $\mu\left(\left( B^u_\epsilon\right)^c\right) \leq \epsilon$ $\forall u\in \mathcal{U}$.
\end{itemize}
We remark that, with minor modifications, models with {\em $k$-body interactions} could be dealt with as well, i.e. models for which $F$ (and $G$) have the form
\[
F(x,m,u):=\int_{\mathcal{U}^{k-1}\times \S^{k-1}} K(u,v_1,\ldots,v_{k-1}) f(x,y_1,\ldots,y_{k-1}) m(dv_1,dy_1) \cdots m(dv_{k-1},dy_{k-1}).
\]
We omit the detains of this extension.

\bigskip

Next lemma proves  that if $F$ (and so also $G$) is of the form \eqref{effe}, then Assumption 2 is satisfied.

\begin{lem}\label{continuity}
Under assumptions (a) and (b) above, $F$ in (\ref{effe}) is continuous on $\mathcal{P}_\mu \times \mathcal{U}$.
\end{lem}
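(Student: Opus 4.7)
The plan is to prove sequential continuity: given $(m_n,u_n) \to (m,u)$ in $\mathcal{P}_\mu \times \mathcal{U}$ with each $m_n \in \mathcal{P}_\mu$, I want $F(x,m_n,u_n) \to F(x,m,u)$ for every fixed $x \in \S$. The natural decomposition is to add and subtract $\int K(u,v)f(x,y)\,m_n(dv,dy)$, producing
\[
T_1 := \int [K(u_n,v) - K(u,v)]\,f(x,y)\,m_n(dv,dy), \quad T_2 := \int K(u,v)\,f(x,y)\,(m_n-m)(dv,dy),
\]
so $T_1$ absorbs the movement in the position argument and $T_2$ absorbs the weak-convergence error in the measure.

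For $T_2$, the kernel $K(u,\cdot)$ is not globally continuous, so the standard portmanteau theorem with bounded continuous test functions does not apply directly. The key observation is that assumption (b) forces the discontinuity set of $K(u,\cdot)$ to be $\mu$-negligible: the slice $B^u_\epsilon$ is open as a preimage of the open set $A_\epsilon$ under $v \mapsto (u,v)$, and on this open set $K(u,\cdot)$ inherits continuity from the joint continuity of $K$ on $A_\epsilon$; hence the discontinuity points of $K(u,\cdot)$ lie in $(B^u_\epsilon)^c$, which has $\mu$-measure at most $\epsilon$ for every $\epsilon>0$ and is therefore $\mu$-null. Since $\S$ is discrete, continuity in $y$ is automatic, and since $m \in \mathcal{P}_\mu$ the product discontinuity set is $m$-null. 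The extended portmanteau theorem for bounded Borel test functions with $m$-a.e.\ continuity then gives $T_2 \to 0$.

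For $T_1$, I would fix $\epsilon > 0$ and invoke uniform continuity of $K$ on $A_\epsilon$ to obtain $\delta > 0$ such that $|K(a,b) - K(a',b')| < \epsilon$ whenever $(a,b),(a',b') \in A_\epsilon$ are within distance $\delta$. For $n$ large enough, $d(u_n,u) < \delta$, so I split the integrand over the ``good'' set $B^u_\epsilon \cap B^{u_n}_\epsilon$, where both $(u,v)$ and $(u_n,v)$ lie in $A_\epsilon$ and the integrand is bounded by $\epsilon\|f\|_\infty$, and its complement, whose first marginal under $m_n$ is controlled by $\mu((B^u_\epsilon)^c) + \mu((B^{u_n}_\epsilon)^c) \leq 2\epsilon$. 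This gives $|T_1| \leq C\epsilon$ for $n$ large, and hence $T_1 \to 0$.

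The main obstacle is the absence of global continuity of $K$, which enters both terms and forces assumption (b) to be used in two rather different ways. In $T_1$ one must keep $n$ in a regime where $(u_n,v)$ lies in $A_\epsilon$; the uniform-in-$u$ form of the bound $\mu((B^u_\epsilon)^c) \leq \epsilon$—applied at the moving point $u = u_n$—is precisely what allows the residual ``bad'' set to be discarded uniformly in $n$. In $T_2$ the delicate observation is that letting $\epsilon \to 0$ makes the discontinuity set of $K(u,\cdot)$ a $\mu$-null set, so the portmanteau extension applies even though $K(u,\cdot)$ need not be continuous.
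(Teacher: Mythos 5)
Your proof is correct and follows essentially the same route as the paper: the same two-term decomposition, the same Portmanteau argument for the measure term (using that assumption (b) forces the discontinuity set of $v \mapsto K(u,v)$ to be $\mu$-null), and the same $\epsilon$-splitting of the kernel term via $B^u_\epsilon$. The only, harmless, difference is in the kernel term, where you invoke uniform continuity on $A_\epsilon$ together with the uniform-in-$u$ bound $\mu\left(\left(B^{u_n}_\epsilon\right)^c\right)\leq\epsilon$ at the moving point $u_n$, whereas the paper uses pointwise convergence on $B^u_\epsilon$ plus dominated convergence; both are valid.
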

\begin{proof}
Let $u_n$ be a sequence in $\mathcal{U}$, converging to $u\in \mathcal{U}$ and $m_n\in \mathcal{P}_\mu$ converging weakly to $m\in \mathcal{P}_\mu$. Then
\begin{equation}{\label{first-second}}
\begin{split}
\left| F(x,m_n,u_n)-F(x,m,u)\right|\leq\int_{\mathcal{U}\times \S}\left| K(u_n,v)-K(u,v)\right| |f(x,y)|m_n(dv,dy) & \\
+\left| \int_{\mathcal{U}\times \S} K(u,v) f(x,y)m_n(dv,dy)- \int_{\mathcal{U}\times \S} K(u,v) f(x,y)m(dv,dy)\right|.
\end{split}
\end{equation}
First we deal with the second term in the r.h.s. of the previous equation. Assumption (b) implies, in particular, that for each $u \in \mathcal{U}$
\[
\mu\left(\{v\in \mathcal{U}\, |\, v \mapsto K(u,v) \text{ is continuous in $v$ }\}\right)=1.
\]
Since both $K$ and $f$ are bounded, $\S$ is a finite set, and $m_n$ converges weakly to $m$, by the Portmanteau Theorem (see \cite{Le12}, Section 8.5), the second term goes to zero.

Now we look at the first term of (\ref{first-second}).
Denote by $C>0$ the upper bound for $f$. For any $\epsilon >0$, it holds
\[
\int_{\mathcal{U}\times \S}\left| K(u_n,v)-K(u,v)\right| |f(x,y)|m_n(dv,dy)\leq C\int_{\mathcal{U}}\left| K(u_n,v)-K(u,v)\right| \mu(dv)
\]
\[
\leq  C\int_{B^u_\epsilon}\left| K(u_n,v)-K(u,v)\right| \mu(dv)+
C\int_{\left(B^u_\epsilon\right)^c}\left| K(u_n,v)-K(u,v)\right| \mu(dv)
\]
On $B^u_\epsilon$, $K$ is uniformly continuous, thus the first term converge to zero, by the dominated convergence's theorem. On the other hand, since $K$ is bounded,
$\left| K(u_n,v)-K(u,v)\right|\leq \hat{C}$ for some $\hat{C}>0$, so:
\[
\int_{\left(B^u_\epsilon\right)^c}\left| K(u_n,v)-K(u,v)\right| dv\leq \hat{C}\mu\left( \left(B^u_\epsilon\right)^c\right)\leq \hat{C}\epsilon.
\]
By the arbitrariness of $\epsilon>0$,  we have that
\[
\int_{\mathcal{U}\times \S}\left| K(u_n,v)-K(u,v)\right| |f(x,y)|m_n(dv,dy)
\]
goes to zero as $n \ra +\infty$; 
continuity of $F$ on  $\mathcal{P}_\mu\times {\mathcal{U}}$ thus follows.

\end{proof}

Now we prove  that if $F$ (and so also $G$) is of the previous form (see (\ref{effe})), then also Assumption 4 is satisfied.

\begin{lem}\label{uniforme}
Let $F$ be as in (\ref{effe}), and assume assumptions (a) and (b) hold. Then 
for any  sequence $m_n\in P({\mathcal{U}} \times \S)$ converging  weakly to $m\in \mathcal{P}_{\mu}$ we have  $F(x,m_n,u) \to F(x,m,u)$ uniformly in $u\in{\mathcal{U}}$
\end{lem}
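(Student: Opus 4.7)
The plan is to argue by contradiction, combining compactness of $\mathcal{U}$ with the joint continuity of $F$ on $\mathcal{P}_\mu \times \mathcal{U}$ established in Lemma~\ref{continuity}. First I would assume uniform convergence fails: then there exist $\epsilon>0$, a subsequence of $(m_n)$ (still indexed by $n$), and points $u_n \in \mathcal{U}$ such that
$$|F(x,m_n,u_n) - F(x,m,u_n)| \geq \epsilon \quad \text{for all } n.$$
Compactness of $\mathcal{U}$ permits passing to a further subsequence along which $u_n \to u^* \in \mathcal{U}$. I would then invoke the joint continuity conclusion of Lemma~\ref{continuity} twice: applied along $(m_n,u_n)\to(m,u^*)$ it yields $F(x,m_n,u_n) \to F(x,m,u^*)$, while applied along $(m,u_n)\to(m,u^*)$ it yields $F(x,m,u_n) \to F(x,m,u^*)$. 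Subtracting, the left-hand side of the displayed inequality tends to $0$, contradicting the lower bound $\epsilon$, which would finish the proof.

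The main obstacle, and essentially the only nontrivial point, is that Lemma~\ref{continuity} is formally stated for inputs in $\mathcal{P}_\mu$, whereas here $m_n$ only lies in $P(\mathcal{U}\times\S)$ with weak limit $m\in\mathcal{P}_\mu$. I would therefore first verify that the proof of Lemma~\ref{continuity} extends to this slightly larger setting. The key observation is that the first marginal $\mu_n$ of $m_n$ converges weakly to $\mu$. The Portmanteau step that handles the second summand in the proof of Lemma~\ref{continuity} uses only the weak convergence $m_n\to m$ and the $\mu$-a.e. continuity of $v\mapsto K(u,v)$, so it goes through unchanged. The first summand is bounded by $C\int|K(u_n,v)-K(u,v)|\,\mu_n(dv)$ and is split over $B^u_\epsilon$ and its complement: uniform continuity of $K$ on the open set $A_\epsilon$ controls the integral on $B^u_\epsilon$, while for the complement a Portmanteau upper bound on the closed set $(B^u_\epsilon)^c$ gives $\limsup_n \mu_n((B^u_\epsilon)^c) \leq \mu((B^u_\epsilon)^c) \leq \epsilon$, which is arbitrarily small. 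Once this extended joint continuity is in hand, the compactness/contradiction argument above closes the argument.
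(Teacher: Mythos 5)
Your proposal is correct, and it organizes the argument differently from the paper. The paper proves the lemma directly: it fixes $u$, gets pointwise convergence from Portmanteau, then establishes an equicontinuity-type bound $|F(x,m_n,u')-F(x,m_n,u)|\le (C+2C\hat C)\epsilon$ for $u'$ in a neighborhood $\mathcal{O}^1_u$ of $u$ and $n$ large — by splitting $\int|K(u',v)-K(u,v)|\,\mu_n(dv)$ over $B^u_\epsilon$ and its closed complement and applying Portmanteau to get $\mu_n\bigl((B^u_\epsilon)^c\bigr)\le 2\epsilon$ eventually — and finally passes to a finite subcover of the compact $\mathcal{U}$. You instead argue by contradiction, extract $u_n\to u^*$ by compactness, and invoke joint sequential continuity of $F$ along $(m_n,u_n)\to(m,u^*)$. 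These are the two standard ways of packaging the fact that continuous convergence on a compact space is uniform, and the analytic core is the same in both: the marginal estimate on $(B^u_\epsilon)^c$ via Portmanteau together with the control of $K$ on $B^u_\epsilon$. You correctly spotted the one genuine subtlety — Lemma \ref{continuity} is stated for $m_n\in\mathcal{P}_\mu$ while here $m_n$ only lies in $P(\mathcal{U}\times\S)$ — and your fix (weak convergence of the first marginals $\mu_n\to\mu$ plus the Portmanteau upper bound on the closed set $(B^u_\epsilon)^c$) is precisely the device the paper deploys inside its own proof of this lemma, so your "extended Lemma \ref{continuity}" is legitimate. The only caveat, which you share with the paper's display \eqref{suB}, is the assertion that uniform continuity of $K$ on $A_\epsilon$ makes $|K(u_n,v)-K(u^*,v)|$ small uniformly over $v\in B^{u^*}_\epsilon$ once $u_n$ is close to $u^*$; strictly this needs $(u_n,v)\in A_\epsilon$ for all such $v$, which openness of $A_\epsilon$ alone does not guarantee, but since the published proof makes the identical step your argument is no weaker. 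Your modular route buys a reusable joint-continuity statement; the paper's direct route avoids restating Lemma \ref{continuity} at the price of repeating its estimates.
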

\begin{proof}
Fix $u\in {\mathcal{U}}$, and $\epsilon>0$.

Since both $K$ and $f$ are bounded, $\S$ is a finite set, $\mu\left(\{v\in \mathcal{U}\, |\, K(u,v) \text{ is continuous }\}\right)=1$, and $m_n$ converges weakly to $m$,  by the Portmanteau Theorem (see \cite{Le12}, Section 8.5), $F(x,m_n,u) \to F(x,m,u)$ {\em pointwise} in $u$. We need to show that this convergence is actually uniform in $u$. By pointwise convergence
we  have  that for each $u \in \mathcal{U}$
 there exists $N^1_{\epsilon,u}\in \N$ such that 
 \be\label{primaunif}
\left| F(x,m_n,u)-F(x,m,u)\right|\leq \epsilon
\ee
 for all $n>N^1_{\epsilon,u}$.
 
Moreover, for any other  $u'\in {\mathcal{U}}$, we have:
\be\label{effeu'}
\left| F(x,m_n,u')-F(x,m,u')\right|\leq 
\ee
\[
\left| F(x,m_n,u')-F(x,m_n,u)\right| +\left| F(x,m_n,u)-F(x,m,u)\right|+\left| F(x,m,u)-F(x,m,u')\right|
\]
Note that
\[
\left| F(x,m_n,u')-F(x,m_n,u)\right| \leq \int_{{\mathcal{U}}\times \S}\left| K(u',v)-K(u,v)\right| |f(x,y)| m_n(dv,dy).
\]
Denoting by $C>0$ an upper bound for $f$, and by $\mu_n$ the marginal of $m_n$ on $\mathcal{U}$, we have
\[
\left| F(x,m_n,u')-F(x,m_n,u)\right| \leq C \int_{{\mathcal{U}}} \left| K(u',v)-K(u,v)\right| \mu_n(dv)
\]
\[
=
C \int_{B^u_\epsilon} \left| K(u',v)-K(u,v)\right| \mu_n(dv)+  C \int_{\left(B^u_\epsilon\right)^c} \left| K(u',v)-K(u,v)\right| \mu_n(dv)
\]
Since $K$ is uniformly continuous in $B^u_\epsilon$, there exists an open neighborhood $\mathcal{O}^1_u\subseteq \mathcal{U}$ of $u$  such that  if $u'\in \mathcal{O}^1_u$ then $ \left| K(u',v)-K(u,v)\right| \leq \epsilon$, which implies 
\be\label{suB}
 \int_{B^u_\epsilon} \left| K(u',v)-K(u,v)\right| \mu_n(dv)\leq \epsilon.
\ee
Moreover by boundedness of $K$, we have $\left| K(u_n,v)-K(u,v)\right|\leq \hat{C}$ for some $\hat{C}>0$, so
\[
 \int_{\left(B^u_\epsilon\right)^c} \left| K(u',v)-K(u,v)\right| \mu_n(dv)\leq \hat{C} \mu_n\left(\left(B^u_\epsilon\right)^c\right).
\]
Since $\mu_n$ converges to $m$, $\left(B^u_\epsilon\right)^c$ is a closed set, and $\mu \left(\left(B^u_\epsilon\right)^c\right) \leq \epsilon$, by the Portmanteau Theorem (see \cite{Le12}, Section 8.5), there exists $N_{\epsilon,u}^2$ such that $m_n\left({\left(B^u_\epsilon\right)^c}\right)\leq 2\epsilon$ for all $n\geq N_{\epsilon,u}^2$, so
\be\label{suBc}
\int_{\left(B^u_\epsilon\right)^c} \left| K(u',v)-K(u,v)\right| \mu_n(dv)\leq 2 \hat{C} \epsilon
\ee
Thus, for any $n\geq N_{\epsilon,u}^2$ and any $u'\in \mathcal{O}^1_u$, from equations (\ref{suB}) and (\ref{suBc}) we have:
\begin{equation}\label{new}
\left| F(x,m_n,u')-F(x,m_n,u)\right| \leq \left(C+2 C\hat{C} \right) \epsilon.
\end{equation}

By Lemma \ref{continuity}, we know that $F$ is continuous on $\mathcal{P}_\mu\times \mathcal{U}$, thus there exists another neighborhood $\mathcal{O}^2_u\subseteq \mathcal{U}$ uf $u$ such that if $u'\in \mathcal{O}^2_u$ then 
\be{\label{ultima}}
\left| F(x,m,u)-F(x,m,u')\right|\leq \epsilon,
\ee
 since $m\in \mathcal{P}_\mu$.

Let $\mathcal{O}_u=\mathcal{O}_u^1\cap \mathcal{O}_u^2$ and $N_{\epsilon,u}=\max\{N^1_{\epsilon,u},\, N_{\epsilon,u}^2\}$. Using equations (\ref{primaunif}), (\ref{new}), and (\ref{ultima}), 
we have that if $u'\in \mathcal{O}_u$ and $n>N_{\epsilon,u}$, 
\[
\left| F(x,m_n,u')-F(x,m,u')\right|\leq \left(C\epsilon+ 2C\hat{C}\right)\epsilon+\epsilon+\epsilon=\left(C+2C\hat{C}+2\right)\epsilon
\]
Now we conclude using the compactness of $\mathcal{U}$. In fact the open neighborhoods $(\mathcal{O}_u)_{u \in \mathcal{U}}$ gives an open cover of $\mathcal{U}$, so there exists a finite cover, say $\mathcal{O}_{u_1},\cdots, \mathcal{O}_{u_r}$, for some $r>0$. Let 
$N_\epsilon=\max_{i=1}^r N_{\epsilon,u_i}$, we have that for all $n>N_\epsilon$ and for all $u\in\mathcal{U}$
\[
\left| F(x,m_n,u)-F(x,m,u)\right|\leq \left(C+2C\hat{C}+2\right)\epsilon.
\]
\end{proof}
There are several $N-$player games possessing the same limit game where $F$ and $G$ are of the type \eqref{effe}.

\noindent
{\bf{Example 1}}
 Let $(U_i)_{i \geq 1}$ be a sequence of $\mathcal{U}$-valued, i.i.d. random variables with law $\mu$. Set
\[
u_i^N := U_i.
\]
Moreover choose $F_N = F$, $G_N = G$, i.e.
\[
F_N\big(X_i(t), m^{N,i}_{\bm{X}}(t), u_i^N \big) = \frac{1}{N-1} \sum_{j : j \neq i} K(u_i^N,u_j^N) f(x_i,x_j).
\]
For applications of these models see \cite{ACDL22}.
Assumption 5(a) holds trivially, thus, by theorem \ref{th:approx}, equilibria for the limit game are $\e_N$-Nash for the $N$-player game.

\noindent
{\bf{Example 2}}
Let $\mathcal{U} = [0,1]$, $\mu = \lambda $ Lebesgue measure on $[0,1]$,   $u_i^N := \frac{i}{N}$ and set
\begin{equation}{\label{effeN}}
F_N\big(X_i(t), m^{N,i}_{\bm{X}}(t), u_i^N \big) = \frac{1}{N-1} \sum_{j : j \neq i} K\left(\frac{i}{N},\frac{j}{N}\right) f(x_i,x_j),
\end{equation}
which gives the  Kac-type interaction.  Also in this case, since $F_N = F$ and $G_N = G$, assumption 5(a) holds trivially, thus equilibria for the limit game are $\e_N$-Nash for the $N$-player game.

\noindent
{\bf{Example 3}}
A relevant generalization  of the previous example is obtained by assuming that, for the $N-$player models the functions $F_N$, and $G_N$ are as in equation \eqref{effeN}, but now instead of the function $K$ we have, for any $N$, different  functions $K_N$,  which are defined on the set $\left\{\left(\frac{i}{N},\frac{j}{N}\right): \, i,j=1,\ldots,N \right\}$, and we assume that  
 $K^N \rightarrow K$ in the {\em{ cut norm}}, i.e.
\[
\|K^N - K \|_{\square}   := \sup_{S,T \in \mathcal{B}([0,1])}\left|\int_{S \times T} [K^N(u,v) - K(u,v)] du dv \right| \rightarrow 0
\]
where $K^N$ is meant to be extended to $[0,1)^2$ as the step function
\[
K^N(u,v) := K^N \left( \frac{\lfloor Nu +1 \rfloor}{N},  \frac{\lfloor Nv +1 \rfloor}{N} \right).
\]
It has been proved (see \cite{LO12}, Lemma 8.11) that the cut norm is equivalent to the following {\footnotesize{ $\infty \to 1$}}-norm: for any $H:[0,1]^2\to \R$  
\[
\| H\|_{\infty\to 1}:= \sup_{||\varphi||_{\infty}\leq 1}\int_0^1\left|\int_0^1 H(u,v)\varphi(v) dv\right|du
\]

The following fact holds:
\begin{prop}\label{cut-norm}
Assume that $\|K^N - K \|_{\square} \to 0$ and that $K$ satisfies assumption (a) and (b), then the   functions $F_N$ and $F$ (and $G_N$ and $G$) satisfy assumption 5(b).
\end{prop}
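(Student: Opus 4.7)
The plan is to convert the sum $\sum_{j \neq i}[K^N(i/N, j/N) - K(i/N, j/N)] f(x_i, x_j)$ into an integral against a step-function kernel on $[0,1)^2$, and then apply the cut-norm convergence. I would introduce the grid-sampled step function $\widetilde K(u,v) := K(\lfloor Nu+1\rfloor/N, \lfloor Nv+1\rfloor/N)$ and set $H^N := K^N - \widetilde K$, both viewed as step functions on $[0,1)^2$. A direct computation gives
\begin{equation*}
\sum_{j \neq i} [K^N(i/N, j/N) - K(i/N, j/N)] f(x_i, x_j) \;=\; N^2 \int_{(i-1)/N}^{i/N} \!\!\int_0^1 H^N(u,v)\, \varphi_i(v)\, dv\, du,
\end{equation*}
where $\varphi_i(v) := f(x_i, x_j)$ on the $j$-th cell $[(j-1)/N, j/N)$ for $j \neq i$, and $0$ on the $i$-th cell.

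The obstruction is that $\varphi_i$ depends on the row index $i$ through $x_i$, so one cannot directly invoke the $\infty\to 1$-norm, which requires a test function of $v$ only. The key idea is to exploit the finiteness of $\S$: the decomposition $f(x_i, x_j) = \sum_{x \in \S} \mathds{1}_{\{x_i = x\}} f(x, x_j)$ separates the $i$- and $j$-dependence, and for each $x \in \S$ the function $\psi_x(v) := f(x, x_j)$ on the $j$-th cell no longer depends on $i$. Pulling the absolute value inside the sum over $x$, using $\sum_i \big|\int_{(i-1)/N}^{i/N} g\big| \leq \int_0^1 |g|$, and absorbing the $O(1/N)$ diagonal correction, I obtain the uniform estimate
\begin{equation*}
\sup_{\bm x \in \S^N} \frac{1}{N} \sum_{i=1}^N \bigl| F_N(x_i, m^{N,i}_{\bm x}, u_i^N) - F(x_i, m^{N,i}_{\bm x}, u_i^N)\bigr| \;\leq\; \frac{N}{N-1}\, |\S|\, \|f\|_\infty \, \|H^N\|_{\infty\to 1} \,+\, O(1/N).
\end{equation*}

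It then remains to show $\|H^N\|_{\infty\to 1} \to 0$; by Lemma 8.11 of \cite{LO12}, this is equivalent to $\|H^N\|_\square \to 0$. By the triangle inequality, $\|H^N\|_\square \leq \|K^N - K\|_\square + \|K - \widetilde K\|_\square$. The first term vanishes by hypothesis. For the second, I would prove the stronger $L^1$-convergence via dominated convergence: assumption (b), applied with $\epsilon = 1/n$ and unioning, shows that $K$ is continuous $\lambda\otimes\lambda$-a.e., so $\widetilde K(u,v) \to K(u,v)$ pointwise almost everywhere, while $\|K\|_\infty$ supplies the dominating constant. The same argument yields the analogous statement for $G$.

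The main obstacle is the row-dependence of the natural test function $\varphi_i$, which prevents a direct appeal to the $\infty\to 1$-norm; the resolution through the finite state space $\S$ is what makes the cut-norm hypothesis usable. A secondary subtlety is the convergence $\widetilde K \to K$ in cut norm, for which the almost-everywhere continuity of $K$ guaranteed by assumption (b) is essential.
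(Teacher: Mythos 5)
Your proof is correct and follows essentially the same route as the paper's: both isolate the row-dependence of the test function by exploiting the finiteness of $\S$ (your decomposition $f(x_i,x_j)=\sum_{x\in\S}\mathds{1}_{\{x_i=x\}}f(x,x_j)$ is a concrete instance of the paper's separable reduction $f(x,y)=\sum_l h_l(x)k_l(y)$), pass to step functions and the $\infty\to 1$ norm, and control the grid-sampling error $\|\widetilde K - K\|_{\square}$ through the almost-everywhere continuity of $K$ supplied by assumption (b) plus dominated convergence. No gaps.
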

\begin{proof}
We may assume, without loss of generality, that:
$f(x,y)= h(x)k(y).$
In fact, since $\Sigma$ is a finite set, $f$ can be written as $$f(x,y)=\sum_{l=1}^m f_l(x,y)= \sum_{l=1}^m h_l(x)k_l(y),$$ thus $F_N=\sum_{l=1}^mF^l_N$, similarly for $F$, and the convergence of all summands imply the convergence of $F_N$.

We have:
\begin{equation}\label{uno}
\begin{split}
\left|F_N\left(x_i, m^{N,i}_{\underline{x}}, \frac{i}{N} \right) \right. -  & \left.
F\left(x_i, m^{N,i}_{\underline{x}}, \frac{i}{N}\right) \right| \\
& = \left|
\frac{1}{N-1}\sum_{\stackrel{j=1}{{\mbox{\tiny $ j\neq i$}}}}^N\left( K^N\left(\frac{i}{N},\frac{j}{N}\right) -
K\left(\frac{i}{N},\frac{j}{N} \right)\right) h(x_i)k(x_j)\right|  \\
& \leq ||h||_{\infty}||k||_{\infty}\left|
\frac{1}{N-1}\sum_{\stackrel{j=1}{{\mbox{\tiny $ j\neq i$}}}}^N\left( K^N\left(\frac{i}{N},\frac{j}{N}\right) -
K\left(\frac{i}{N},\frac{j}{N} \right)\right) \varphi_N\left(\frac{j}{N}\right) \right|  
\end{split}
\end{equation}
where $\varphi_N\left(\frac{j}{N}\right):=\frac{k(x_j)}{||k||_{\infty}}$.
Denote by:
\[
\Delta_N(i)=\left|
\frac{1}{N-1}\sum_{\stackrel{j=1}{{\mbox{\tiny $ j\neq i$}}}}^N\left( K^N\left(\frac{i}{N},\frac{j}{N}\right) -
K\left(\frac{i}{N},\frac{j}{N} \right)\right) \varphi_N\left(\frac{j}{N}\right) \right|  
\]
Define:
\[
\hat{\Delta}_N(i):=\left|
\frac{1}{N}\sum_{j=1}^N\left( K^N\left(\frac{i}{N},\frac{j}{N}\right) -
K\left(\frac{i}{N},\frac{j}{N} \right)\right) \varphi_N\left(\frac{j}{N}\right) \right|  
\]
Then, letting for any $v\in [0,1]$, let $\varphi_N(v)=\varphi_N\left(\frac{\lfloor Nv\lfloor}{N}\right)$, and denoting   by $\tilde{K}^N(u,v)=K\left(\frac{\lfloor Nu\lfloor}{N},\frac{\lfloor Nv\lfloor}{N}\right)$, we have:
\[
\hat{\Delta}_N(i)=\left|\int_0^1\left( K^N\left(\frac{i}{N},v\right)-\tilde{K}^N\left(\frac{i}{N},v\right)\right)\varphi_N(v) dv \right|
\]
Notice also that:
\begin{multline*}
\left| \Delta_N(i)-\hat{\Delta}_N(i)\right| \\ \leq
\left|
\sum_{\stackrel{j=1}{{\mbox{\tiny $ j\neq i$}}}}^N\left( K^N\left(\frac{i}{N},\frac{j}{N}\right) -
K\left(\frac{i}{N},\frac{j}{N} \right)\right) \varphi_N\left(\frac{j}{N}\right)\left(\frac{1}{N-1}-\frac{1}{N}\right)\right|
\\ +\left| \frac{1}{N}\left( K^N\left(\frac{i}{N},\frac{i}{N}\right) -
K\left(\frac{i}{N},\frac{i}{N} \right)\right)\varphi_N\left(\frac{i}{N}\right)\right|
\end{multline*}
By assumption $K$ and $K^N$ are bounded by a common bound $C>0$. From the previous inequality we have:
\[
\left| \Delta_N(i)-\hat{\Delta}_N(i)\right|  \leq \frac{2C}{N}.
\]
Thus we conclude:
\[
 \Delta_N(i)\leq \left|\int_0^1\left( K^N\left(\frac{i}{N},v\right)-\tilde{K}^N\left(\frac{i}{N},v\right)\right)\varphi_N(v) dv \right| + \frac{2C}{N}.
 \]
From equation \eqref{uno} and from  the previous inequality, we have:
\[
\frac{1}{N} \sum_{i=1}^N \left|F_N\left(x_i, m^{N,i}_{\underline{x}}, \frac{i}{N} \right) 
F\left(x_i, m^{N,i}_{\underline{x}}, \frac{i}{N}\right) \right| 
\]
\[
\leq  ||h||_{\infty}||k||_{\infty}\left( \sum_{i=1}^N  \left | \int_0^1 \left( K^N\left(\frac{i}{N},v\right)-\tilde{K}^N\left(\frac{i}{N},v\right)\right)\varphi_N(v)dv \right|  +\frac{C}{N}\right)
\]
\[
= ||h||_{\infty}||k||_{\infty}\left( \int_0^1\left | \int_0^1 \left( K^N\left(\frac{i}{N},v\right)-\tilde{K}^N\left(\frac{i}{N},v\right)\right)\varphi_N(v)dv \right|  +\frac{C}{N}\right) 
 \]
 \[
\leq  ||h||_{\infty}||k||_{\infty}\left( \sup_{\| \phi\|\leq 1} \int_0^1\left | \int_0^1 \left( K^N\left(\frac{i}{N},v\right)-\tilde{K}^N\left(\frac{i}{N},v\right)\right)\phi(v))dv \right|  +\frac{C}{N}\right).
  \]
 Thus it holds:
 \begin{equation}\label{dis}
 \sup_{x\in \Sigma^N}\frac{1}{N} \sum_{i=1}^N \left|F_N\left(x_i, m^{N,i}_{\underline{x}}, \frac{i}{N} \right) 
F\left(x_i, m^{N,i}_{\underline{x}}, \frac{i}{N}\right) \right| \leq 
 ||h||_{\infty}||k||_{\infty} \left(\| K^N-\tilde{K}^N\|_{\infty\to1}+\frac{C}{N}\right).
 \end{equation}
Since, by assumption $\|K^N - K \|_{\square} \to 0$, which is equivalent to $\|K^N - K \|_{\infty \to 1} \to 0$, to conclude that assumption 5(b) holds, i.e. 
\[
\lim_{N\to+\infty} \sup_{x\in \Sigma^N}\frac{1}{N} \sum_{i=1}^N \left|F_N\left(x_i, m^{N,i}_{\underline{x}}, \frac{i}{N} \right) 
F\left(x_i, m^{N,i}_{\underline{x}}, \frac{i}{N}\right) \right|=0
\]
by equation \eqref{dis}, it is enough to prove that 
\begin{equation}\label{appr}
\|\tilde{K}^N - K \|_{\infty \to 1} \to 0
\end{equation}
We have:
\[
\|\tilde{K}^N - K \|_{\infty \to 1} =\sup_{\|\phi\|\leq 1}\int_0^1\left |\int_0^1 \left(\tilde{K}^N(u,v)-K(u,v)\right) \phi(v) dv \right| du 
\]
\[
\leq \int_0^1\int_0^1 \left | \tilde{K}^N(u,v)-K(u,v)\right|  dv  du.
\]
For each $\epsilon >0$, by assumption (b) on the function $K$, we know that there exists 
an open set $A_{\epsilon}\subseteq [0,1]^2$ such that: 
$K(\cdot,\cdot)$ is uniformly continuous in $A_{\epsilon}$, so we have:
\[
\|\tilde{K}^N - K \|_{\infty \to 1} \leq 
\underbrace{\iint_{A_{\epsilon}}\left | \tilde{K}^N(u,v)-K(u,v)\right|  dv  du}_{I_N^1}+ \underbrace{\iint_{(A_{\epsilon})^c}\left | \tilde{K}^N(u,v)-K(u,v)\right|  dv  du}_{I_N^2}
\]
Since on $A_{\epsilon}$ the function $K$ is uniformly continuous, then $\tilde{K}^N(u,v)\to K(u,v)$ as $N\to +\infty$, so $\lim_{N\to +\infty} I_N^1 = 0$, by dominated convergence theorem.
On the other hand, we have:
\[
I_N^2\leq 2\|K\|_{\infty}\int_0^1\lambda\left(\left(B^u_{\epsilon}\right)^c\right) du\leq  2\|K\|_{\infty} \epsilon.
\]
So equation \eqref{appr} holds, and the statement is proved.
\end{proof}

In this case Assumption 5(b) holds, i.e. equilibria for the limit game are $\e_N$-$\delta_N$-Nash for the $N$-player game.

One example of a sequence $K^N$ such that $\|K^N - K \|_{\square} \to 0$ is given by the random adjacency matrices whose entries are independent random variables
\[
K^N\left(\frac{i}{N},\frac{j}{N} \right) \sim Be\left(K\left(\frac{i}{N},\frac{j}{N} \right)\right),
\]
where we assume $K:[0,1] \times [0,1] \ra [0,1]$.
A proof of this fact is not so easy to find in the literature, so we sketch it here for completeness.
\begin{prop} \label{prop:erdos}
Suppose $K:[0,1] \times [0,1] \ra [0,1]$ satisfies assumptions (a) and (b), and let 
\[
K^N\left(\frac{i}{N},\frac{j}{N} \right) \sim Be\left(K\left(\frac{i}{N},\frac{j}{N} \right)\right)
\]
be independent random variables. Then $\|K^N - K \|_{\square} \to 0$ almost surely.
\end{prop}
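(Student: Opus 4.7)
The strategy is to split $\|K^N - K\|_\square$ via the triangle inequality into a deterministic discretization error and a random fluctuation term. Let $\tilde K^N(u,v) := K(\lfloor Nu+1\rfloor/N, \lfloor Nv+1\rfloor/N)$ be the same deterministic step-function discretization of $K$ already introduced in the proof of Proposition \ref{cut-norm}. The argument given there around \eqref{appr} shows $\|\tilde K^N - K\|_{\infty\to 1} \ra 0$, and since this norm is equivalent to the cut norm (\cite{LO12}, Lemma 8.11), the deterministic piece $\|\tilde K^N - K\|_\square$ vanishes. It thus remains to prove $\|K^N - \tilde K^N\|_\square \ra 0$ almost surely.

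My first step is to reduce the continuous cut norm of the random step function $D^N := K^N - \tilde K^N$ to a discrete cut norm over subsets of $\{1,\ldots,N\}$. Both $K^N$ and $\tilde K^N$ are constant on every cell $I_i \times I_j$ of the uniform $N\times N$ grid on $[0,1)^2$, so writing $\sigma_i := N|S \cap I_i|, \tau_j := N|T\cap I_j| \in [0,1]$ and $M^N_{ij} := K^N(i/N,j/N) - K(i/N,j/N)$ one obtains
\[
\int_{S\times T} D^N(u,v)\,du\,dv \;=\; \frac{1}{N^2}\sum_{i,j=1}^N M^N_{ij}\,\sigma_i \tau_j.
\]
The right-hand side is bilinear in $(\sigma,\tau)$ on a product of intervals, so its modulus is maximized at a vertex of $\{0,1\}^N \times \{0,1\}^N$, whence
\[
\|K^N - \tilde K^N\|_\square \;=\; \frac{1}{N^2}\sup_{S,T \subseteq \{1,\ldots,N\}}\Bigl|\sum_{(i,j)\in S\times T}M^N_{ij}\Bigr|.
\]

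The second step is a concentration estimate. The entries $M^N_{ij}$ are independent, centered and bounded by $1$, so for fixed $S,T$ Hoeffding's inequality yields a sub-Gaussian tail $\P(|\sum_{(i,j)\in S\times T} M^N_{ij}| > cN^{3/2}) \leq 2\exp(-\kappa c^2 N)$ for some absolute $\kappa > 0$. A union bound over the $4^N$ pairs $(S,T)$ then gives
\[
\P\!\left(\|K^N - \tilde K^N\|_\square > \frac{c}{\sqrt N}\right) \;\leq\; 2 \cdot 4^N \exp(-\kappa c^2 N),
\]
which is summable in $N$ once $c$ is chosen above a fixed absolute threshold. Borel--Cantelli then delivers $\|K^N - \tilde K^N\|_\square = O(1/\sqrt N)$ almost surely, and combining with the first paragraph's deterministic convergence closes the proof.

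The main obstacle I anticipate is the reduction from the continuous cut norm to the discrete grid version, since that reduction is precisely what makes the $4^N$ union bound balance against the Gaussian tail coming from Hoeffding. The remaining ingredients --- the cut-versus-$\infty\to 1$ norm equivalence, the dominated-convergence argument of Proposition \ref{cut-norm} handling the deterministic piece, and Hoeffding applied to independent bounded centered entries --- are standard.
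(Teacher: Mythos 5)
Your proposal is correct and follows essentially the same route as the paper's proof: the same decomposition into $\|\tilde K^N - K\|_\square$ (handled by the deterministic argument from Proposition \ref{cut-norm}) plus $\|K^N - \tilde K^N\|_\square$, the same reduction of the latter to a discrete cut norm over vertices of $\{0,1\}^{2N}$ (you invoke bilinearity, the paper biconvexity of the square --- the same extremality argument), and the same Hoeffding/union-bound/Borel--Cantelli conclusion. The only cosmetic difference is your threshold $cN^{3/2}$ versus the paper's $N^{7/4}$, giving an a.s.\ rate $O(N^{-1/2})$ instead of $N^{-1/4}$; both suffice.
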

\begin{proof}
Let $\tilde{K}^N(u,v)=K\left(\frac{\lfloor Nu\lfloor}{N},\frac{\lfloor Nv\lfloor}{N}\right)$. In the proof of Proposition \ref{cut-norm} we have seen that $\|\tilde{K}^N - K \|_{\square} \to 0$. Thus, is suffices to show that $\|K^N - \tilde{K}^N \|_{\square} \to 0$ almost surely. Noting that $K^N$ and $\tilde{K}^N$ are constant on the squares $\Big(\frac{i-1}{N}, \frac{i}{N} \Big] \times \Big(\frac{j-1}{N}, \frac{j}{N} \Big]$, we have that for each $S,T \in \mathcal{B}(0,1)$
\[
\int_{S \times T} \left[K^N(u,v) - \tilde{K}^N(u,v) \right] du dv = \frac{1}{N^2} \sum_{i,j=1}^N a_{i} b_j \left[K^N\left(\frac{i}{N},\frac{j}{N}\right) - K\left(\frac{i}{N},\frac{j}{N}\right) \right],
\]
where 
\[
\begin{split}
a_{i} & = N \lambda\left(S \cap \Big(\frac{i-1}{N}, \frac{i}{N} \Big]\right)  \in [0,1] \\
b_j & = N \lambda\left(T \cap \Big(\frac{j-1}{N}, \frac{j}{N} \Big] \right) \in [0,1]
\end{split}
\]
This implies that 
\[
\|K^N - \tilde{K}^N \|_{\square} =  \sup_{a_i, b_j \in [0,1]} \frac{1}{N^2} \left| \sum_{i,j=1}^N a_{i}b_j \left[K^N\left(\frac{i}{N},\frac{j}{N}\right) - K\left(\frac{i}{N},\frac{j}{N}\right) \right] \right|.
\]
The function $(a_i, b_j)_{i,j=1}^N  \mapsto \left| \sum_{i,j=1}^N a_i b_j \left[K^N\left(\frac{i}{N},\frac{j}{N}\right) - K\left(\frac{i}{N},\frac{j}{N}\right) \right] \right|^2$ is easily checked to be convex in $(a_i)_{i=1}^N$ for fixed $(b_j)_{j=1}^N$, and convex in $(b_j)_{j=1}^N$ for fixed $(a_i)_{i=1}^N$. This is enough to conclude that
its supremum on $[0,1]^{2N}$ is attained on extremals, so
\[
\|K^N - \tilde{K}^N \|_{\square} = \sup_{a_i, b_j \in \{0,1\}\}} \frac{1}{N^2} \left| \sum_{i,j=1}^N a_i b_j \left[K^N\left(\frac{i}{N},\frac{j}{N}\right) - K\left(\frac{i}{N},\frac{j}{N}\right) \right] \right|.
\]
Note that $X_{ij} := a_i b_j \left[K^N\left(\frac{i}{N},\frac{j}{N}\right) - K\left(\frac{i}{N},\frac{j}{N}\right) \right]$ are bounded, independent, mean zero random variables. By Hoeffding's inequality (\cite{Hof63}) there is a constant $C>0$ such that
\[
\P\left(\left| \sum_{i,j=1}^N X_{ij} \right| > t \right) \leq e^{-\frac{t^2}{CN^2}}.
\]
Choosing $t = N^{7/4}$ we have, for fixed $(a_1,\ldots,a_n,b_1,\ldots b_N) \in \{0,1\}^{2N}$
\[
\P\left(\frac{1}{N^2} \left| \sum_{i,j=1}^N a_i b_j \left[K^N\left(\frac{i}{N},\frac{j}{N}\right) - K\left(\frac{i}{N},\frac{j}{N}\right) \right] \right| > N^{-1/4} \right) \leq e^{-\frac{N^{3/2}}{C}}.
\]
Taking the supremum over $a_i,b_j$
\[
\P\left(\|K^N - \tilde{K}^N \|_{\square} > N^{-1/4} \right) \leq 2^{2N} e^{-\frac{N^{3/2}}{C}}.
\]
Since this last sequence is summable in $N$, we can apply Borel-Cantelli lemma, which states that, with probability one, $\|K^N - \tilde{K}^N \|_{\square} \leq N^{-1/4}$ except for finitely many $N$, and the conclusion follows.

\end{proof}

\subsection{Low resolution models}

We consider here models in which the player at a given position $u \in \mathcal{U}$ does not directly ``observe'' the state of players at any other positions $v$, but rather a {\em local average} of states of players around position $v$. More specifically, suppose we are given functions $f:\S \times P(\S) \ra \R$, bounded and continuous in the second variable, $K:\mathcal{U} \times \mathcal{U} \ra \R$ continuous (but a weaker continuity as in assumption (b) in the previous section could be dealt with as well), $\varphi: \mathcal{U} \times \mathcal{U} \ra (0,+\infty)$ continuous, and assume $F$ (and $G$) are of the form, for $m(y,dv) = m_y(v) m^{(2)}(dv)$:
\be \label{lowres}
F(x,m,u) = \int K(u,v) f(x,m^{\varphi}(v)) m^{(2)}(dv)
\ee
where
\[
m^{\varphi}(v) = \left(m^{\varphi}_y(v)\right)_{y \in \S},
\]
and
\[
m^{\varphi}_y(v) := \frac{\int \varphi(v,w) m_y(w) m^{(2)}(dw)}{\int \varphi(v,w) m^{(2)}(dw)}.
\]
It is easily checked that Assumption 2 and Assumptions 4 hold.

Note that, assuming $F^N = F$, for the $N$-player game the cost has the form
\[
F(x_i, m^{N,i}_{{\bf x}}, u_i^N) = \frac{1}{N-1} \sum_{j : j \neq i} K(u_i^N, u_j^N)f\left(x_i , \left(\frac{ \sum_{k:k \neq j} \varphi(u_j,u_k) {\bf 1}_{\{x_k = y\}}}{ \sum_{k:k \neq j} \varphi(u_j,u_k)}\right)_{y \in \S}\right)
\]
For the limit game, for which only $m \in \mathcal{P}_{\mu}$ matters, one could consider the limit case in which $\varphi(v,w) \mu(dw) \simeq \delta_v(dw)$, where $\delta_v$ denotes the Dirac measure. This would amount to, for $m \in \mathcal{P}_{\mu}$,
\[
F(x,m,u) =  \int K(u,v) f\left(x,(m_y(v))_{y \in \S} \right) \mu(dv).
\]
This case is singular, in the sense that in general this $F$ is not necessarily continuous in $\mathcal{P}_{\mu}$.

\section{Proofs}
\subsection{Proof of Theorem \ref{th:existence}}

We first prove two Lemmas, and then we give the proof of Theorem \ref{th:existence}.

\begin{lem}\label{V-continua}
The function  $V^m(t,x,u)$ is bounded, and it is continuous as a function of $m$ on $\mathcal{P}_{\mu}$.
\end{lem}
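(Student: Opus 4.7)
My plan is to prove boundedness by testing the variational formula \eqref{infV} against the trivial feedback, and to prove continuity by a standard sup-over-controls argument that exploits the fact that $m$ enters the cost only through $F$ and $G$.

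For boundedness, I would first plug $\alpha \equiv 0$ into \eqref{infV}: under this feedback the chain never jumps, so $V^m(t,x,u) \leq (T-t)L(x,0,u) + \int_t^T F(x,m(s),u)\,ds + G(x,m(T),u)$, each summand being uniformly bounded thanks to continuity of $L(\cdot,0,\cdot)$ on the compact $\S\times\mathcal{U}$ (Assumption 1) and boundedness of $F, G$ (Assumption 2). For the matching lower bound, the convexity inequality \eqref{convex} evaluated at $a=0$ gives $L(x,a',u) \geq L(x,0,u) + \nabla_a L(x,0,u)\cdot a' + \g\|a'\|^2$; minimizing the resulting quadratic in $a'\in\R^{\S}$ produces a continuous, hence uniformly bounded, function of $u$ on the compact $\mathcal{U}$. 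Any admissible cost in \eqref{infV} is therefore bounded below by a constant independent of $m$, yielding a uniform lower bound on $V^m$.

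For continuity in $m$, which I interpret as a flow $[0,T] \to \mathcal{P}_{\mu}$ equipped with uniform weak convergence in $t$, the decisive observation is that in \eqref{infV} both the law of $Y^{\a}_u$ and the running cost $L(Y^{\a}_u, \a, u)$ are independent of $m$. Consequently, when comparing the cost functionals at the same feedback $\a$ for two flows $m$ and $\tilde{m}$, the $L$-contributions cancel exactly, and the elementary bound $|\inf_{\a} f - \inf_{\a} g| \leq \sup_{\a} |f-g|$ applied to \eqref{infV} yields
\[
|V^m(t,x,u) - V^{\tilde{m}}(t,x,u)| \leq T \sup_{s\in[t,T]} \max_{y\in\S} |F(y,m(s),u) - F(y,\tilde{m}(s),u)| + \max_{y\in\S} |G(y,m(T),u) - G(y,\tilde{m}(T),u)|.
\]
Since $\mathcal{P}_{\mu} \times \mathcal{U}$ is compact, Assumption 2 promotes $F, G$ to uniformly continuous functions on this space, so the right-hand side converges to zero uniformly in $(t,x,u)$ whenever $m_n \to m$ uniformly in $t$ in the weak topology; for merely pointwise-in-$t$ weak convergence the same conclusion still holds for each fixed $t$, combining uniform continuity of $F, G$ with dominated convergence on the integral term.

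The point requiring most care is simply fixing the topology on the space of flows under which the claim is meant and verifying that it is the one needed for the subsequent fixed-point argument proving Theorem \ref{th:existence}. Once this is settled, the sup-over-controls argument above is essentially routine, and no regularity of $V$ in $(t,x)$ nor any propagation-of-chaos input is required for this lemma.
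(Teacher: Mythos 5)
Your proof is correct, and while the boundedness part coincides with the paper's (constant control for the upper bound, the convexity inequality \eqref{convex} for the lower bound), your continuity argument takes a genuinely different route. The paper works with the HJB system \eqref{HJB}: it writes $V^{m_1}-V^{m_2}$ via the integrated ODE, uses the Lipschitz continuity of the Hamiltonian $H(x,\cdot,u)$, bounds $\|\nabla V^{m_1}-\nabla V^{m_2}\|$ by $2d\,\sup_x|V^{m_1}-V^{m_2}|$, and closes the estimate with Gronwall's lemma, arriving at
\[
\sup_{x}|V^{m_1}(t,x,u)-V^{m_2}(t,x,u)| \le \Bigl( \int_t^T \sup_x |F(x,m_1(s),u)-F(x,m_2(s),u)|\,ds + \sup_x|G(\cdots)|\Bigr) e^{2dk(u)(T-t)}.
\]
You instead exploit the variational representation \eqref{infV} directly: since the law of $Y^\a_u$ and the $L$-term are independent of the flow $m$, the bound $|\inf_\a f - \inf_\a g|\le \sup_\a|f-g|$ reduces everything to the $F$ and $G$ discrepancies, with no Gronwall step, no Lipschitz constant of $H$, and no exponential factor. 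Your estimate is therefore cleaner and manifestly uniform in $(t,x,u)$, whereas the paper's PDE-style computation is more in the spirit of the Gronwall/monotonicity machinery it reuses in the uniqueness proof. Both arguments rest on the same established facts (\eqref{infV} as the value function, Assumptions 1 and 2), and your remark about which topology on flows is actually needed for the fixed-point argument (uniform-in-$t$ weak convergence, as in the metric on $\mathcal{C}([0,T],\mathcal{P}_\mu)$ used in the proof of Theorem \ref{th:existence}) correctly identifies the one point the paper leaves implicit.
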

\begin{proof}
Taking a constant control $\bar{\a} \in [0,+\infty)^{\S}$, by \eqref{infV}, 
\begin{equation*}
\begin{split}
V^m(t,x,u) \leq \E_{Y^{\bar{\a}}_u(t) = x} &\left[ \int_t^T L(Y^{\bar{\a}}_u(s), \bar{\a}, u) ds \right.+ \\ 
 & \left. 
 \int_t^T F(Y^{\bar{\a}}_u(s), m(s),u) ds + G(Y^{\bar{\a}}_u(T), m(T),u) \right]
 \end{split}
 \end{equation*}
 which is bounded since $F$ and $G$ are bounded, and so it is $u \mapsto L(y, \bar{\a}, u)$, as it is continuous in a compact space. This shows that $V^m$ is bounded from above. Boundedness from below follows from \eqref{Vm}, boundedness of $F$ and $G$ and the fact that, by \eqref{convex}, $L(y, \bar{\a}, u)$ is bounded from below.

We now prove continuity in $m$. For any  $m_1, m_2\in \mathcal{P}_{\mu}$, by \eqref{HJB} we have:
\[
V^{m_1}(t,x,u)-V^{m_2}(t,x,u)= -\int_t^T \left( H(x, \nabla V^{m_1}(s,x,u),u)-  H(x, \nabla V^{m_2}(s,x,u),u)\right) ds+
\]
\[+ \int_t^T\left( F(x,m_1(s),u)-F(x,m_2(s),u)\right) ds+ G(x,m_1(T),u)-G(x,m_2(T),u)
\]
The function $H(x,p,u)$ is Lipschitz in its second variable, and denote by $k(u)$ its Lipschitz constant
(since $\S$ is finite we may assume that the Lipschitz constant depends only on $u$). Thus we have:
\[
|V^{m_1}(t,x,u)-V^{m_2}(t,x,u)|\leq \int_t^T  k(u)   \| \nabla V^{m_1}(s,x,u)- \nabla V^{m_2}(s,x,u) \| ds+
\]
\[+ \int_t^T\left| F(x,m_1(s),u)-F(x,m_2(s),u)\right| ds+ |G(x,m_1(T),u)-G(x,m_2(T),u)|
\]

Moreover we have:
\begin{equation*}
\begin{split}
  \| \nabla V^{m_1}(s,x,u)&- \nabla V^{m_2}(s,x,u) \| 
 \\ & \leq \sum_{y\in \S}|V^{m_1}(s,y,u)-V^{m_1}(s,x,u)
  -V^{m_2}(s,y,u)-V^{m_2}(s,x,u)|  \\
&
   \leq 2d\sup_{x\in \S} \left | V^{m_1}(s,x,u)-V^{m_2}(s,x,u) \right |
\end{split}
\end{equation*}
Letting
\[
M^u(t)=\sup_{x\in\S} \left | V^{m_1}(t,x,u)-V^{m_2}(t,x,u) \right |
\]
we have:
\[
|V^{m_1}(t,x,u)-V^{m_2}(t,x,u)|\leq \int_t^T k(u)2d M^u(s) ds+
\]
\[
+ \int_t^T\sup_{x\in \S}\left| F(x,m_1(s),u)-F(x,m_2(s),u)\right | ds+ \sup_{x\in \S}|G(x,m_1(T),u)-G(x,m_2(T),u)|
\]
Thus:
\[
M^u(t)\leq 
 2dk(u) \int_t^T M^u(s) ds + \]
\[+  \left( \int_t^T\sup_{x\in \S}\left| F(x,m_1(s),u)-F(x,m_2(s),u)\right | ds+ \sup_{x\in \S}|G(x,m_1(T),u)-G(x,m_2(T),u)|\right) 
 \]
 By applying the Bellman-Gronwall's Lemma, we get:
\begin{equation*}
\begin{split}
M^u(t)\leq & \left(  \int_t^T\sup_{x\in \S}\left| F(x,m_1(s),u)-F(x,m_2(s),u)\right | ds \right. \\ & \left.+ \sup_{x\in \S}|G(x,m_1(T),u)-G(x,m_2(T),u)| \right) e^{2dk(u)(T-t)}
\end{split}
\end{equation*}
Since the functions $G$ and $F$ are continuous on $ \mathcal{P}_{\mu}\times \mathcal{U}$, we have that also the function $V^m$ is continuous as a function on $m$, with $m$ varying in $ \mathcal{P}_{\mu}$.
\end{proof}
\begin{lem}\label{Lip}
There exists an unique minimizer $a^*(x,p,u)$ in \eqref{ham}, and $a^*(x,p,u)$ is continuos in $u$ and Lipschitz in $p$.
\end{lem}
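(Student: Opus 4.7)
The plan is to reduce everything to properties of the strongly convex function
\[
\psi_{p,u}(a) \;:=\; \sum_{y \neq x} a_y p_y + L(x,a,u),
\]
of which $a^*(x,p,u)$ is the minimizer on the closed convex set $[0,+\infty)^{\Sigma}$. Since the linear term $\sum_{y\neq x} a_y p_y$ is convex, the inequality \eqref{convex} in Assumption~1 transfers verbatim from $L$ to $\psi_{p,u}$ with the same modulus $\gamma$. Strong convexity on a closed convex set, together with coercivity (which follows from $\psi_{p,u}(a) \geq L(x,0,u)+(p+\nabla_a L(x,0,u))\cdot a + \gamma\|a\|^2$ by using \eqref{convex} at the point $0$), guarantees existence and uniqueness of the minimizer $a^*(x,p,u)$. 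This is the first step.

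The Lipschitz property in $p$ is the cleanest part and I would do it next, via the variational characterization. Let $a_i^* = a^*(x,p_i,u)$ for $i=1,2$. The first-order optimality condition on the cone $[0,+\infty)^{\Sigma}$ reads
\[
(\tilde p_i + \nabla_a L(x,a_i^*,u))\cdot(a - a_i^*) \;\geq\; 0 \qquad \forall a\in[0,+\infty)^{\Sigma},
\]
where $\tilde p_i$ denotes $p_i$ extended to all of $\Sigma$ by $0$ on the $x$-component. Testing the inequality for $i=1$ at $a=a_2^*$, for $i=2$ at $a=a_1^*$, and summing gives
\[
(\tilde p_1 - \tilde p_2)\cdot(a_1^*-a_2^*) + \bigl(\nabla_a L(x,a_1^*,u)-\nabla_a L(x,a_2^*,u)\bigr)\cdot(a_1^*-a_2^*) \leq 0.
\]
The monotonicity inequality for the gradient of a $\gamma$-strongly convex function (in the sense of \eqref{convex}) yields $(\nabla_a L(x,a_1^*,u)-\nabla_a L(x,a_2^*,u))\cdot(a_1^*-a_2^*) \geq 2\gamma\|a_1^*-a_2^*\|^2$, and Cauchy--Schwarz on the first term gives $\|a_1^*-a_2^*\| \leq \frac{1}{2\gamma}\|p_1-p_2\|$.

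For continuity in $u$, I would use a compactness/uniqueness argument. Fix $(x,p)$ and let $u_n \to u$ in $\mathcal{U}$. From the coercivity estimate obtained in step one,
\[
\gamma \|a^*(x,p,u_n)\|^2 \;\leq\; \|p + \nabla_a L(x,0,u_n)\|\,\|a^*(x,p,u_n)\|,
\]
and since $\nabla_a L(x,0,\cdot)$ is continuous on the compact $\mathcal{U}$ (Assumption~1), the sequence $\{a^*(x,p,u_n)\}$ is bounded. Extract any convergent subsequence $a^*(x,p,u_{n_k}) \to a^\dagger$; the continuity of $L$ in $u$ passes to the limit in $\psi_{p,u_{n_k}}(a^*(x,p,u_{n_k})) \leq \psi_{p,u_{n_k}}(a)$, showing that $a^\dagger$ minimizes $\psi_{p,u}$ on $[0,+\infty)^{\Sigma}$. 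By uniqueness $a^\dagger = a^*(x,p,u)$, so the whole sequence converges.

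I do not expect any serious obstacle: the only subtlety is the constrained nature of the minimization (the minimum is on $[0,+\infty)^{\Sigma}$, not on all of $\R^{\Sigma}$), which is why I use the variational inequality formulation rather than setting the gradient of $\psi_{p,u}$ equal to zero. Once this is handled, the strong convexity hypothesis does essentially all the work.
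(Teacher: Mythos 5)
Your proposal is correct and, for existence/uniqueness and the Lipschitz-in-$p$ parts, it is essentially the paper's argument: the paper likewise combines minimality of $a^*$ with the strong-convexity inequality \eqref{convex} and the first-order condition on the cone $[0,+\infty)^{\Sigma}$ to get $\gamma \|a^*(x,\tilde{p},u)-a^*(x,p,u)\|^2\leq -\sum_{y\neq x}\bigl( a_y^*(x,\tilde{p},u) -a_y^*(x,p,u) \bigr)\bigl(\tilde{p}_y-p_y\bigr)$ and then Cauchy--Schwarz; your symmetrized version (summing the two variational inequalities and invoking $2\gamma$-strong monotonicity of $\nabla_a L$) is a clean equivalent and even yields the slightly better constant $1/(2\gamma)$ versus the paper's $1/\gamma$. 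Where you genuinely diverge is continuity in $u$: the paper runs the same variational-inequality computation once more with $u,u'$ in the roles of $p,\tilde p$ and obtains the \emph{quantitative} bound $\|a^*(x,{p},u')-a^*(x,p,u)\|\leq \gamma^{-1}\|\nabla_a L(x,a^*(x,p,u),u')- \nabla_a L(x,a^*(x,p,u),u)\|$, from which continuity is immediate because $\nabla_a L$ is continuous in $u$ at the \emph{fixed} point $a^*(x,p,u)$. Your soft compactness-plus-uniqueness route also works, but with one caveat: to show the cluster point $a^\dagger$ minimizes $\psi_{p,u}$ you must control $\psi_{p,u_{n_k}}(a^*(x,p,u_{n_k}))$, where \emph{both} arguments move, and Assumption 1 only asserts continuity of $L$ in $u$ for fixed $a$. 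This is fillable — either via the standard fact that pointwise convergence of convex functions is locally uniform, or more directly via the one-sided bound $\psi_{p,u_{n_k}}(a^*(x,p,u_{n_k}))\geq \psi_{p,u_{n_k}}(a^\dagger)+\bigl(\tilde p+\nabla_a L(x,a^\dagger,u_{n_k})\bigr)\cdot\bigl(a^*(x,p,u_{n_k})-a^\dagger\bigr)$, whose right-hand side converges to $\psi_{p,u}(a^\dagger)$ — but you should say so. The paper's quantitative approach sidesteps this entirely and additionally delivers an explicit modulus of continuity; yours is shorter but purely qualitative.
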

\begin{proof}
Assumption (\ref{convex}) implies that the function $L(x,a,u)$ is strictly convex and coercive in the variable $a$, thus equation (\ref{ham}) has  an unique minimizer. 

Now we prove continuity in $u$. Using (\ref{convex}) and the fact that $a^*$ is a minimizer,
we have:
\[
L(x,a^*(x,p,u),u')+\sum_{y\neq x}a_y^*(x,p,u){p}_y\geq L(x,a^*(x,p,u'),u') +\sum_{y\neq x} a_y^*(x,p,u'){p}_y\geq
\]
\[
\geq
L(x,a^*(x,p,u),u') +  \nabla_a L(x,a^*(x,p,u),u') \cdot (a^*(x,{p},u')-a^*(x,p,u)) 
\]
\[
+\g \|a^*(x,{p},u')-a^*(x,p,u)\|^2
+ \sum_{y\neq x}a_y^*(x,{p},u') {p}_y
\]
From this we have:
\begin{equation*}
\begin{split}
\g \|a^*(x,{p},u')-& a^*(x,p,u)\|^2  \leq \left( p^x+\nabla_a L(x,a^*(x,p,u),u') \right) \cdot (a^*(x,{p},u)-a^*(x,p,u')) \\
& = \left( p^x+\nabla_a L(x,a^*(x,p,u),u) \right) \cdot (a^*(x,{p},u)-a^*(x,p,u')) \\
& + \left( \nabla_a L(x,a^*(x,p,u),u')- \nabla_a L(x,a^*(x,p,u),u)\right) \cdot(a^*(x,{p},u)-a^*(x,p,u')),
\end{split}
\end{equation*}
where $p^x_y = p_y$ for $y \neq x$, and $p^x_x=0$.
Since $a^*(x,{p},u)$ is the minimum of the function $\sum_{y \neq x} a_y p_y + L(x,a,u)$,  we have $p_y^x+\frac{\partial}{\partial a_y} L(x,a^*(x,p,u),u)=0$ except possibly if $a^*_y(x,p,u) = 0$, and in this case it must be $p_y^x+\frac{\partial}{\partial a_y} L(x,a^*(x,p,u),u)\geq 0$. Thus
\[
\left( p^x+\nabla_a L(x,a^*(x,p,u),u) \right) \cdot (a^*(x,{p},u)-a^*(x,p,u')) \leq 0.
\]
So we can rewrite the previous inequality as:
\[
\g \|a^*(x,{p},u')-a^*(x,p,u)\|^2\leq
\]
\[
\leq  \left( \nabla_a L(x,a^*(x,p,u),u')- \nabla_a L(x,a^*(x,p,u),u)\right) \cdot (a^*(x,{p},u)-a^*(x,p,u')) 
\]
Thus, we get:
\[
\|a^*(x,{p},u')-a^*(x,p,u)\|\leq \frac{1}{\gamma}\|\nabla_a L(x,a^*(x,p,u),u')- \nabla_a L(x,a^*(x,p,u),u)\|
\]
which  proves the continuity of $a^*$  in the $u$ variable, since by Assumption 1 $\nabla_a L(x,a,u)$  is continuous in $u$.

Now we prove that  $a^*(x,p,u)$ is  Lipschitz in $p$.
Since $a^*$ is a minimizer, and from (\ref{convex}),  we have:
\[
L(x,a^*(x,p,u),u)+\sum_{y\neq x}a_y^*(x,p,u)\tilde{p}_y\geq L(x,a^*(x,\tilde{p},u),u) +\sum_{y\neq x} a_y^*(x,\tilde{p},u)\tilde{p}_y\geq
\]
\[
\geq
L(x,a^*(x,p,u),u) +  \nabla_a L(x,a^*(x,p,u),u) \cdot (a^*(x,\tilde{p},u)-a^*(x,p,u)) 
\]
\[
+\g \|a^*(x,\tilde{p},u)-a^*(x,p,u)\|^2
+ \sum_{y\neq x}a_y^*(x,\tilde{p},u) \tilde{p}_y\]
Thus:
\[
\sum_{y\neq x}\left[ \left( a_y^*(x,\tilde{p},u) -a_y^*(x,p,u) \right)\tilde{p}_y\right]+\g \|a^*(x,\tilde{p},u)-a^*(x,p,u)\|^2
\]
\[
+\nabla_a L(x,a^*(x,p,u),u) \cdot (a^*(x,\tilde{p},u)-a^*(x,p,u))\leq 0\]

Adding and subtracting $\sum_{y\neq x}\left[ \left( a_y^*(x,\tilde{p},u) -a_y^*(x,p,u) \right){p}_y\right]$ we get:
\[
\sum_{y\neq x}\left[ \left( a_y^*(x,\tilde{p},u) -a_y^*(x,p,u) \right)\left(\tilde{p}_y-p_y\right)\right]+\g \|a^*(x,\tilde{p},u)-a^*(x,p,u)\|^2
\]
\[
+\left(\nabla_a L(x,a^*(x,p,u),u) + p^x\right) \cdot (a^*(x,\tilde{p},u)-a^*(x,p,u))\leq 0\]

Since $a^*$ is a minimizer we have:

\[
( \nabla_a L(x,a^*(x,p,u),u)  +p^x)\cdot (a-a^*(x,p,u))\geq 0.
\]
Thus 
\[
\g \|a^*(x,\tilde{p},u)-a^*(x,p,u)\|^2\leq -\sum_{y\neq x}\left[ \left( a_y^*(x,\tilde{p},u) -a_y^*(x,p,u) \right)\left(\tilde{p}_y-p_y\right)\right]\leq 
\]
\[
\leq\|\tilde{p}-p\|\|a^*(x,\tilde{p},u)-a^*(x,p,u)\|
\]
So 
\[
\|a^*(x,\tilde{p},u)-a^*(x,p,u)\|\leq \frac{1}{\g}\|\tilde{p}-p\|
\]
\end{proof}

Now we prove Theorem \ref{th:existence}.

\begin{proof}
Fix  $m\in \mathcal{C}\left([0,T],  \mathcal{P}_{\mu}\right)$, thus $m(t)(x,du) = m_x(t,u) \mu(du)$. Let $V^m(t,x,u)$ the solution of the first two equations in  \eqref{MFG}.  For this function $V^m$ consider the equation:
\be \label{due}
\begin{split}
\frac{d}{dt} n_x(t,u) & = \sum_{y \in \S} n_y(t,u) a_x^*(y,\nabla V^m(t,y,u),u) \\
n_x(0,u) & = m_x(0,u)
\end{split}
\ee
Given the solution of \eqref{due}, define $\Phi(m)(t)(x,du)=n_x(t,u)\mu(du)$.
Since the function $V^m(t,x,u)$ is continuous as a function of $m$ on $\mathcal{P}_{\mu}$, by Lemma \ref{V-continua}, we have that the function $\Phi$:
 \[
\begin{array}{lccc}
\Phi: & \mathcal{C}\left([0,T],  \mathcal{P}_{\mu}\right) &\to &\mathcal{C}\left([0,T],  \mathcal{P}_{\mu}\right) \\
& m &\mapsto &\Phi(m)
\end{array}
\]
is continuos. Here $\mathcal{C}\left([0,T],  \mathcal{P}_{\mu}\right)$ is considered as a metric space with, for $m_1,m_2 \in \mathcal{C}\left([0,T],  \mathcal{P}_{\mu}\right)$
\[
\mbox{dist}(m_1,m_2) := \sup_{t \in [0,T]} d(m_1(t),m_2(t)),
\]
where $d(\cdot,\cdot)$ is any metric inducing the topology of weak convergence.
We want to prove that $t \mapsto \Phi(m)(t)$ is Lipschitz, with the Lipschitz constant that does not depend on $m$. 

Notice that we have:
\[
d\left(\Phi(m)(t),\Phi(m)(s)\right)\leq \sum_{x\in \S}\int_{\mathcal U}|n_x(t,u)-n_x(s,u)|\mu(du)
\]
\[
= \sum_{x\in \S}\int_{\mathcal U} \left|\int_s^t \left(\frac{d}{dr}n_x(r,u)\right)dr\right| \mu(du)\leq
 \sum_{x\in \S}\int_{\mathcal U} \int_s^t \left| \frac{d}{dr}n_x(r,u)\right|  dr\mu(du)
\]
We know that $a^*(x,p,u)$ is continuos  in   $u\in \mathcal{U}$ and  $\mathcal{U}$ is compact. Moreover $a^*(x,p,u)$ is  Lipschitz in $p$ (see lemma \ref{Lip}), thus, since $V^m$ is bounded, $a^*(x,p,u)$ is  bounded in the range of $V^m$. So  there exists a  constant $C$ such that:
\[
 \sum_{x\in \S}\int_{\mathcal U} \int_s^t \left| \frac{d}{dr}n_x(r,u)\right|  dr\mu(du)
\leq C  \sum_{x\in \S}\int_{\mathcal U} \int_s^t    \sum_{y\in \S} n_y(r,u) dr\mu(du)
\leq C|\S| |t-s|
\]
So  for all measure  $m\in \mathcal{C}\left([0,T],  \mathcal{P}_{\mu}\right)$, 
we have
\[
d\left(\Phi(m)(t),\Phi(m)(s)\right)\leq C|\S| |t-s|
\]
Thus  we may restrict the map $\Phi$, to the subset  $\mathcal{H}$ of $ \mathcal{C}\left([0,T],  \mathcal{P}_{\mu}\right)$ of all flows of measures $m(t)(x,du)=m_x(t,u) \mu(du)$ which are   Lipschitz  with  constant $ C|\S| $. We have seen that $\Phi \left(\mathcal{H}\right)\subset \mathcal{H}$; moreover  $\mathcal{H}$ is convex and, by Ascoli-Arzela theorem, is compact. Thus by the Brower fixed point Theorem there exists $\bar{m}\in \mathcal{H}$ such that $\Phi(\bar{m})=\bar{m}$. It follows that the two functions $V^{\bar{m}}(t,x,u)$ and $\bar{m}$ are solutions of  the Mean Field Game equations \eqref{MFG}.
\end{proof}

\subsection{Proof of Theorem \ref{th:uniqueness}}

Assume that the two pairs $\Big(V(t,x,u), \, m(t)(x,du)\Big)$ and $\left(\tilde{V}(t,x,u), \, \tilde{m}(t)(x,du)\right)$, with 
$m(t)(x,du)=m_x(t,u) \mu(du)$ and $\tilde{m}(t)(x,du)=\tilde{m}_x(t,u) \mu(du)$,  
 are both solutions of equations \eqref{MFG}.
Let's consider the following derivative:
\be \label{derivata}
A=\frac{d}{dt}\left[\int_{\mathcal{U}}\sum_{x\in \S} \left(\tilde{V}(t,x,u)-V(t,x,u)\right)\left( \tilde{m}_x(t,u)-m_x(t,u)\right) \mu(du)\right]
\ee
We may interchange the derivate with the integral, and using equations \eqref{MFG}, we get:
\[
\int_{\mathcal{U}}\sum_{x\in \S}\left(H(x, \nabla\tilde{V}(t,x,u),u)-H(x, \nabla{V}(t,x,u),u)\right)\left( \tilde{m}_x(t,u)-m_x(t,u)\right)\mu(du)
\]
\[
-\int_{\mathcal{U}}\sum_{x\in \S}\left(F(x, \tilde{m}_x(t,u),u)-F(x, {m}_x(t,u),u)\right)\left( \tilde{m}_x(t,u)-m_x(t,u)\right)\mu(du)
\]
\[+
\int_{\mathcal{U}}\sum_{x\in \S} (\tilde{V}(t,x,u)-V(t,x,u))\left(\sum_{y\in \S}\tilde{m}_y(t,u)a^*_x(y, \nabla\tilde{V}(t,y,u),u)\right)\mu(du)
\]
\[
-
\int_{\mathcal{U}}\sum_{x\in \S} (\tilde{V}(t,x,u)-V(t,x,u))\left(\sum_{y\in \S}{m}_y(t,u)a^*_x(y, \nabla{V}(t,y,u),u)\right)\mu(du)
\]
 Assumption 3 implies that the second term of previous equation is negative. Using the definition of $H(x,p,u)$ we have that:
 \[
 A\leq 
 \]
 \[
\int_{\mathcal{U}}\sum_{x\in \S} \left(L(x,a^*(x,\nabla V{(t,x,u)},u)-L(x,a^*(x,\nabla \tilde{V}{(t,x,u)},u)\right)
\left( \tilde{m}_x(t,u)-m_x(t,u)\right)\mu(du)
\]
\[
+
\int_{\mathcal{U}}\sum_{x,y\in \S} \left[ a^*_y(x,\nabla V{(t,x,u)},u)\Big(V{(t,y,u)}-V{(t,x,u)}\Big)
\right]\left( \tilde{m}_x(t,u)-m_x(t,u)\right)\mu(du)
\]
\[
-
\int_{\mathcal{U}}\sum_{x,y\in \S} \left[ 
a^*_y(x,\nabla \tilde{V}(t,x,u),u)\Big(\tilde{V}(t,y,u)-\tilde{V}(t,x,u)\Big)\right]\left( \tilde{m}_x(t,u)-m_x(t,u)\right)\mu(du)
\]
\[
+
\int_{\mathcal{U}}\sum_{x,y\in \S}\Big(\tilde{V}(t,y,u)-{V}(t,y,u)\Big) \tilde{m}_x(t,u)a^*_y(x,\nabla\tilde{V}(t,x,u),u) \mu(du)
\]
\[
-
\int_{\mathcal{U}}\sum_{x,y\in \S} (\tilde{V}(t,x,u)-V(t,x,u)){m}_y(t,u)a^*_x(y, \nabla{V}(t,y,u),u)\mu(du)
\]
Since we have $\sum_{y\in \S} a^*_y(x,\nabla V(t,x,u),u)=0$ (same for $\tilde{V}$), we get:
 \[
 A\leq 
 \]
 \[
\int_{\mathcal{U}}\sum_{x\in \S} \left(L(x,a^*(x,\nabla V{(t,x,u)},u)-L(x,a^*(x,\nabla \tilde{V}{(t,x,u)},u)\right)
\left( \tilde{m}_x(t,u)-m_x(t,u)\right)\mu(du)
\]
\[
+
\int_{\mathcal{U}}\sum_{x,y\in \S}\left[a^*_y(x,\nabla V(t,x,u),u)V{(t,y,u)}\tilde{m}_x(t,u)\right] \mu(du)
\]
\[
+
\int_{\mathcal{U}}\sum_{x,y\in \S}\left[a^*_y(x,\nabla \tilde{V}(t,x,u),u)\tilde{V}{(t,y,u)} {m}_x(t,u)\right] \mu(du)
\]
\[
-
\int_{\mathcal{U}}\sum_{x,y\in \S}\left[ \tilde{V}{(t,y,u)}m_x(t,u)a^*_y(x,\nabla V(t,x,u),u)\right] \mu(du) 
\]
\[-
 \int_{\mathcal{U}}\sum_{x,y\in \S}\left[ 
{V}{(t,y,u)}\tilde{m}_x(t,u)a^*_y(x,\nabla \tilde{V}(t,x,u),u)\right] \mu(du)
\]
Thus:
 \[
 A\leq 
 \]
 \[
\int_{\mathcal{U}}\sum_{x\in \S} \left(L(x,a^*(x,\nabla V{(t,x,u)},u)-L(x,a^*(x,\nabla \tilde{V}{(t,x,u)},u)\right)
\left( \tilde{m}_x(t,u)-m_x(t,u)\right)\mu(du)
\]
\begin{equation} \label{abcd}
+\int_{\mathcal{U}}\sum_{x,y\in \S}\left[a^*_y(x,\nabla V(t,x,u),u)- a^*_y(x,\nabla \tilde{V}(t,x,u),u)\right]V{(t,y,u)}\tilde{m}_x(t,u) \mu(du)
\end{equation}
\[
+
\int_{\mathcal{U}}\sum_{x,y\in \S}\left[a^*_y(x,\nabla \tilde{V}(t,x,u),u)- a^*_y(x,\nabla{V}(t,x,u),u)\right]\tilde{V}{(t,y,u)}{m}_x(t,u) \mu(du).
\]
We begin to deal with the first term in the r.h.s. of \eqref{abcd}:
 \[
\int_{\mathcal{U}}\sum_{x\in \S} \left(L(x,a^*(x,\nabla V{(t,x,u)},u)-L(x,a^*(x,\nabla \tilde{V}{(t,x,u)},u)\right)
\left( \tilde{m}_x(t,u)-m_x(t,u)\right)\mu(du)
\]
\begin{equation}\label{bcde}
\begin{split}
=\int_{\mathcal{U}}\sum_{x\in \S} \left(L(x,a^*(x,\nabla V{(t,x,u)},u)-L(x,a^*(x,\nabla \tilde{V}{(t,x,u)},u)\right)
 \tilde{m}_x(t,u)\mu(du) \\
 + \int_{\mathcal{U}}\sum_{x\in \S} \left(L(x,a^*(x,\nabla \tilde{V}{(t,x,u)},u)-L(x,a^*(x,\nabla V{(t,x,u)},u)\right)
m_x(t,u)\mu(du).
\end{split}
\end{equation}
Using inequality (\ref{convex})
\begin{equation*}
\begin{split}
L(x,a^*(x,\nabla &V{(t,x,u)},u)  -L(x,a^*(x,\nabla \tilde{V}{(t,x,u)},u) \\ &\leq - \nabla_a L(x,\nabla V(t,x,u),u) \cdot \left(a^*(x,\nabla \tilde{V}(t,x,u),u)- a^*(x,\nabla V(t,x,u),u) \right)  \\ & ~~~- \gamma \|a^*(x,\nabla \tilde{V}(t,x,u),u)- a^*(x,\nabla V(t,x,u),u)\|^2
\end{split}
\end{equation*}
By definition of $a^*$, we have that 
\[
\frac{\partial}{\partial a_y} L(x,\nabla V(t,x,u),u) = V(t,y,u) - V(t,x,u)
\] 
unless $a_y^*(x,\nabla \tilde{V}(t,x,u),u)=0$ and in this case
\[
\frac{\partial}{\partial a_y} L(x,\nabla V(t,x,u),u) \geq V(t,y,u) - V(t,x,u).
\]
It follows that
\begin{equation*}
\begin{split}
- \nabla_a L(x,\nabla & V(t,x,u),u) \cdot \left(a^*(x,\nabla \tilde{V}(t,x,u),u)- a^*(x,\nabla V(t,x,u),u) \right) \\ & \leq \sum_{y \in \S} \left[V(t,y,u) - V(t,x,u)\right] \left(a_y^*(x,\nabla \tilde{V}(t,x,u),u)- a_y^*(x,\nabla V(t,x,u),u) \right).
\end{split}
\end{equation*}
This implies
{\small
\begin{equation*}
\begin{split}
\int_{\mathcal{U}}&\sum_{x\in \S} \left(L(x,a^*(x,\nabla V{(t,x,u)},u)-L(x,a^*(x,\nabla \tilde{V}{(t,x,u)},u)\right)
 \tilde{m}_x(t,u)\mu(du) \\ & \leq   -\int_{\mathcal{U}}\sum_{x,y\in \S} (V(t,y,u)-V(t,x,u))\left(a_y^*(x,\nabla V(t,x,u),u)-a_y^*(x,\nabla \tilde{V}(t,x,u),u)\right)\tilde{m}_x(t,u)\mu(du) \\ &  ~~~-\g \int_{\mathcal{U}}\sum_{x \in \S}  \|a^*(x,\nabla V(t,x,u),u)-a^*(x,\nabla \tilde{V}(t,x,u),u)\|^2 \tilde{m}_x(t,u)\mu(du) \\ & = -\int_{\mathcal{U}}\sum_{x,y\in \S} V(t,y,u)\left(a_y^*(x,\nabla V(t,x,u),u)-a_y^*(x,\nabla \tilde{V}(t,x,u),u)\right)\tilde{m}_x(t,u)\mu(du) \\ &  ~~~-\g \int_{\mathcal{U}}\sum_{x \in \S}  \|a^*(x,\nabla V(t,x,u),u)-a^*(x,\nabla \tilde{V}(t,x,u),u)\|^2 \tilde{m}_x(t,u)\mu(du),
 \end{split}
 \end{equation*}
 }
where in the last step we have used the fact that $\sum_y a^*_y = 0$.
In the same way one proves that
{\small
\begin{equation*}
\begin{split}
\int_{\mathcal{U}}& \sum_{x\in \S} \left(L(x,a^*(x,\nabla \tilde{V}{(t,x,u)},u)-L(x,a^*(x,\nabla V{(t,x,u)},u)\right)
m_x(t,u)\mu(du) \\ & \leq -\int_{\mathcal{U}}\sum_{x,y\in \S} \tilde{V}(t,y,u)\left(a_y^*(x,\nabla \tilde{V}(t,x,u),u)-a_y^*(x,\nabla {V}(t,x,u),u)\right){m}_x(t,u)\mu(du) \\ & -\g \int_{\mathcal{U}}\sum_{x \in \S}  \|a^*(x,\nabla V(t,x,u),u)-a^*(x,\nabla \tilde{V}(t,x,u),u)\|^2 m_x(t,u)\mu(du)
 \end{split}
 \end{equation*}
 }

Inserting these two inequalities in \eqref{bcde}, we obtain
{\small{
 \[
 A\leq 
 \]
 \[
-\int_{\mathcal{U}}\sum_{x,y\in \S} V(t,y,u)\left(a_y^*(x,\nabla V(t,x,u),u)-a_y^*(x,\nabla \tilde{V}(t,x,u),u)\right)\tilde{m}_x(t,u)\mu(du)
\]
\[
-\int_{\mathcal{U}}\sum_{x,y\in \S} \tilde{V}(t,y,u)\left(a_y^*(x,\nabla \tilde{V}(t,x,u),u)-a_y^*(x,\nabla {V}(t,x,u),u)\right){m}_x(t,u)\mu(du)
\]
\[
-\g \int_{\mathcal{U}}\sum_{x \in \S}  \|a^*(x,\nabla V(t,x,u),u)-a^*(x,\nabla \tilde{V}(t,x,u),u)\|^2\left(m_x(t,u)+\tilde{m}_x(t,u)\right)\mu(du)
\]
\[
+\int_{\mathcal{U}}\sum_{x,y\in \S}\left[a^*_y(x,\nabla V(t,x,u),u)- a^*_y(x,\nabla \tilde{V}(t,x,u),u)\right]V{(t,y,u)}\tilde{m}_x(t,u) \mu(du)
\]
\[
+
\int_{\mathcal{U}}\sum_{x,y\in \S}\left[a^*_y(x,\nabla \tilde{V}(t,x,u),u)- a^*_y(x,\nabla{V}(t,x,u),u)\right]\tilde{V}{(t,y,u)}{m}_x(t,u) \mu(du).
\]
\[
= -\g \int_{\mathcal{U}}\sum_{x \in \S}  \|a^*(x,\nabla V(t,x,u),u)-a^*(x,\nabla \tilde{V}(t,x,u),u)\|^2\left(m_x(t,u)+\tilde{m}_x(t,u)\right)\mu(du).
\]
}}
In conclusion, we get:
\[
\frac{d}{dt}\left[\int_{\mathcal{U}}\sum_{x\in \S} \left(\tilde{V}(t,x,u)-V(t,x,u)\right)\left( \tilde{m}_x(t,u)-m_x(t,u)\right) \mu(du)\right] \leq
\]
\[
\leq -\g \int_{\mathcal{U}}\sum_{x \in \S}  \|a^*(x,\nabla V(t,x,u),u)-a^*(x,\nabla \tilde{V}(t,x,u),u)\|^2\left(m_x(t,u)+\tilde{m}_x(t,u)\right)\mu(du)
\]
By taking the integral between $0$ and $T$, and since $m$ and  $\tilde{m}$ have the same initial condition, we have:
\[
\int_{\mathcal{U}}\sum_{x\in \S}
\left( G(x,\tilde{m}_x(T,u),u)- G(x,{m}_x(T,u),u)\right)\left(\tilde{m}_x(T,u)-m_{x}(T,u)\right)\mu(du) \leq
\]
\[
\leq
-\g\int_0^T \int_{\mathcal{U}}\sum_{x \in \S}  \|a^*(x,\nabla V(t,x,u),u)-a^*(x,\nabla \tilde{V}(t,x,u),u)\|^2\left(m_x(t,u)+\tilde{m}_x(t,u)\right)\mu(du)dt
\]
By Assumption 3, the first term of previous inequality is non negative, this implies that:
\[
\int_0^T \int_{\mathcal{U}}\sum_{x \in \S}  \| a^*(x,\nabla V(t,x,u),u)-a^*(x,\nabla \tilde{V}(t,x,u),u)\|^2\left(m_x(t,u)+\tilde{m}_x(t,u)\right)\mu(du)dt=0
\]
Thus:
\begin{equation} \label{uguale}
m_x(t,u)+\tilde{m}_x(t,u)>0 \ \ \Rightarrow \ \ a^*(x,\nabla V(t,x,u),u)=a^*(x,\nabla \tilde{V}(t,x,u),u)
\end{equation}
Equation \ref{uguale} implies that $m$ and $\tilde{m}$ are both solutions of the same ordinary differential equation,  so they are equal by the uniqueness theorem for o.d.e. 
This, in turn, implies that also $V$ and $\tilde{V}$ are solution of the same o.d.e, so  also these two functions are equal.

\subsection{Proof of Theorem \ref{th:approx}}

Let $\mathcal{B}$ be the set of predictable strategies, as defined in Remark \ref{rem:openloop}. For $\beta \in \mathcal{B}$,
we recall (see \eqref{iplayercost} and \eqref{mfcost}) that
{\footnotesize
\[
J_i^N([\boldsymbol{\alpha}^{N,-i}; \beta]) = \E \left[ \int_0^T \left[L(X^N_i(t), \beta(t), u_i^N) + F_N(X_i^N(t), m_{{\bs X}}^{N,i}(t), u_i^N) \right] dt + G_N(X_i^N(T), m_{{\bs X}}^{N,i}(T), u_i^N) \right].
\]
and
\[
J_{u_i^N}(\beta,m) = \E \left[ \int_0^T \left[L(Y^{\beta}_{u_i^N}(t), \beta(t), u_i^N) + F(Y^{\beta}_{u_i^N}(t), m(t), u_i^N) \right] dt + G(Y^{\beta}_{u_i^N}(T), m(T), u_i^N) \right].
\]
}
Note that:
\smallskip
\begin{itemize}
\item
$X_i^N(t)$ and $Y^{\beta}_{u_i^N}(t)$ evolve with the same jump intensities $\beta$ and have the same initial condition, so they have the same law;
\item
the processes $(X_j^N(t))_{j \neq i}$ are independent, and $X_j^N(t)$ has distribution $m(t,u_j^N)$.
\end{itemize}
Set 
{\footnotesize
\[
\tilde{J}_i^N([\boldsymbol{\alpha}^{N,-i}; \beta]) = \E \left[ \int_0^T \left[L(X^N_i(t), \beta(t), u_i^N) + F(X_i^N(t), m_{{\bs X}}^{N,i}(t), u_i^N) \right] dt + G(X_i^N(T), m_{{\bs X}}^{N,i}(T), u_i^N) \right],
\]
}
obtained by $J_i^N([\boldsymbol{\alpha}^{N,-i}; \beta])$ by replacing $F_N,G_N$ with $F,G$. We begin by proving
\begin{equation} \label{approxtilde}
\lim_{N \ra +\infty} \sup_{i=1,\ldots,N} \sup_{\beta \in \mathcal{B}} \left| \tilde{J}_i^N([{\bs \a}^{N,-i}, \beta]) - J_{u_i^N}(\beta,m) \right| = 0,
\end{equation}
By dominated convergence this follows if we show that for every $t \in [0,T)$
\begin{equation} \label{approx3}
\lim_{N \ra +\infty} \sup_{i=1,\ldots,N} \sup_{\beta \in \mathcal{B}} \E \left[\left|F(X_i^N(t), m_{{\bs X}}^{N,i}(t), u_i^N) - F(X_i^N(t), m(t), u_i^N) \right|\right] = 0,
\end{equation}
and the same for $t = T$ with $F$ replaced by $G$.
To show this, we fix an arbitrary $\e>0$. By continuity of $F$, there is a neighborhood $U$ of $m(t)$ (in weak topology) such that for all $\rho \in U$, all $x \in \Sigma$ and all $u \in \mathcal{U}$
\[
|F(x,\rho,u) - F(x, m(t),u)| \leq \e.
\]
Without loss of generality, $U$ can be taken of the form
\[
\left\{ \rho: \, \left|\int \phi_k d\rho - \int \phi_k dm(t) \right| < \delta, \, k=1,\ldots,m \right\},
\]
where $\phi_k: \Sigma \times \mathcal{U} \ra \R$ are bounded continuous functions and $\delta>0$. We now show that
\begin{equation} \label{intorno}
\lim_{N \ra +\infty} \P\left(m_{{\bs X}}^{N,i}(t) \in U\right) = 1.
\end{equation}
By Markov inequality, this follows if we show that, for each $k=1,\ldots,M$
\begin{equation} \label{intorno1}
\lim_{N \ra +\infty} \E \left[ \left|\int \phi_k d m_{{\bs X}}^{N,i}(t) - \int \phi_k dm(t) \right| \right] = 0.
\end{equation}
We do this in two steps. First
note that, using independence of the $X_j^N$,
\begin{equation} \label{lln1}
\begin{split}
\E& \left[\left| \int \phi_k d m_{{\bs X}}^{N,i}(t) -  \frac{1}{N-1} \sum_{j \neq i} \E[\phi_k(X_j^N(t),u_j^N)] \right| \right]
 \\ & ~~~~~= \E\left[\left| \frac{1}{N-1} \sum_{j \neq i}\phi_k(X_j^N(t),u_j^N) -  \frac{1}{N-1} \sum_{j \neq i} \E[\phi_k(X_j^N(t),u_j^N)] \right| \right] \\ & ~~~~~ \leq \left( \mbox{Var} \left( \frac{1}{N-1} \sum_{j \neq i}\phi_k(X_j^N(t),u_j^N)  \right) \right)^{1/2} \\ & ~~~~~  \leq \frac{C}{\sqrt{N}}
 \end{split}
\end{equation}
for a constant $C>0$ which does not depend on $i$ or $k$. Moreover, note now that 
\[
 \E[\phi_k(X_j^N(t),u_j^N)]  = \sum_{x \in \Sigma} \phi_k(x,u_j^N) m_x(t,u_j^N) =: \psi(t,u_j^N),
 \]
 where
 \[
 \psi(t,u) = \sum_{x \in \Sigma} \phi_k(x,u) m_x(t,u) 
 \]
 is continuous and bounded in $u$. Thus, by \eqref{limpos}, 
 \begin{equation} \label{lln2}
 \lim_{N \ra +\infty} \frac{1}{N-1} \sum_{j \neq i} \E[\phi_k(X_j^N(t),u_j^N)]  = \int \psi(t,u) d\mu = \int \phi_k dm(t).
\end{equation}
From \eqref{lln1} and \eqref{lln1},  \eqref{intorno1}, and therefore \eqref{intorno}.
By \eqref{intorno}, it follows that there exists $\bar{N}$, independent of $i$ and $\beta$, such that for $N>\bar{N}$, 
\[
\P\left(m_{{\bs X}}^{N,i}(t) \in U\right) \geq 1-\e.
\]
Therefore for $N \geq \bar{N}$
\begin{equation*}
\begin{split}
 \E & \left[\left|F(X_i^N(t), m_{{\bs X}}^{N,i}(t), u_i^N)  - F(X_i^N(t), m(t), u_i^N) \right|\right]  \\ 
& ~~~~~~ \leq \e + 2 \|F\|_{\infty} \P\left(m_{{\bs X}}^{N,i}(t) \not\in U\right) \leq (1+2|F\|_{\infty} )\e,
\end{split} 
\end{equation*}
which completes the proof of \eqref{approx3}, and therefore of \eqref{approxtilde}.

We now compare $\tilde{J}_i^N([\boldsymbol{\alpha}^{N,-i}; \beta])$ with $J_i^N([\boldsymbol{\alpha}^{N,-i}; \beta])$. Using dominated convergence, it is easily shown that, under Assumption 5(a)
\begin{equation} \label{approxJ}
\lim_{N \ra +\infty} \sup_{i=1,\ldots,N} \sup_{\beta \in \mathcal{B}} \left| \tilde{J}_i^N([{\bs \a}^{N,-i}, \beta]) - J_i^N([\boldsymbol{\alpha}^{N,-i}; \beta]) \right| = 0,
\end{equation}
while, under Assumption 5(b)
\begin{equation} \label{approxJb}
\lim_{N \ra +\infty} \frac{1}{N} \sum_{i=1}^N \sup_{\beta \in \mathcal{B}} \left| \tilde{J}_i^N([{\bs \a}^{N,-i}, \beta]) - J_i^N([\boldsymbol{\alpha}^{N,-i}; \beta]) \right| = 0,
\end{equation}
We now complete the proof for the case Assumption 5(b) holds, and we will indicate where Assumption 5(a) allows to obtain the stronger result. Define
\begin{equation} \label{eN'}
\e'_N := \frac{1}{N} \sum_{i=1}^N \sup_{\beta \in \mathcal{B}} \left| \tilde{J}_i^N([{\bs \a}^{N,-i}, \beta]) - J_i^N([\boldsymbol{\alpha}^{N,-i}; \beta]) \right|.
\end{equation}
By \eqref{approxJb}, $\e'_N \ra 0$ as $N \ra +\infty$. Let $k_N$ be any sequence such that $k_N \ra +\infty$ and $k_N \e_N \ra 0$. Set
\begin{equation} \label{kN}
A_N := \{ i : \sup_{\beta \in \mathcal{B}}\left| \tilde{J}_i^N([{\bs \a}^{N,-i}, \beta]) - J_i^N([\boldsymbol{\alpha}^{N,-i}; \beta]) \right| > k_N \e_N \}.
\end{equation}
Clearly, by \eqref{eN'}
\[
\e'_N \geq \frac{|A_N|}{N} k_N \e'_N,
\]
and therefore $\delta_N := \frac{|A_N|}{N} \ra 0$; note that under Assumption 5(a) we could define
\[
\e'_N := \sup_{i=1,\ldots,N} \sup_{\beta \in \mathcal{B}} \left| \tilde{J}_i^N([{\bs \a}^{N,-i}, \beta]) - J_i^N([\boldsymbol{\alpha}^{N,-i}; \beta]) \right|,
\]
obtaing
$A_N = \emptyset$, so $\delta_N = 0$. Now define
\[
\e''_N := \sup_{i=1,\ldots,N} \sup_{\beta \in \mathcal{B}} \left| \tilde{J}_i^N([{\bs \a}^{N,-i}, \beta]) - J_{u_i^N}(\beta,m) \right|  + k_N \e'_N,
\]
which goes to zero by \eqref{approxtilde}. Note that combining \eqref{approxtilde} and \eqref{kN}, we have that 
\[
\sup_{\beta \in \mathcal{B}} \left| J_i^N([{\bs \a}^{N,-i}, \beta]) - J_{u_i^N}(\beta,m) \right| \leq \e''_N
\]
for all $i \not\in A_N$. Setting $\e_N := 2 \e''_N$, and using the fact that $\a^*$ is the optimal control for the mean-field game, i.e.  $J_{u_i^N}(\a^*,m) \leq J_{u_i^N}(\beta,m)$ for all $\beta \in \mathcal{B}$, we have, again for all $\beta \in \mathcal{B}$ and all $i \not\in A_N$,
\[
J_i^N(\boldsymbol{\alpha}^N) \leq J_{u_i^N}(\a^*,m) + \frac12 \varepsilon_N \leq J_{u_i^N}(\beta,m)+ \frac12 \varepsilon_N  \leq J_i^N([\boldsymbol{\alpha}^{N,-i}; \beta]) + \varepsilon_N,
\]
completing the proof.

\end{document}